\theoremstyle{plain}
\newtheorem{thm}{Theorem}[section]
\newtheorem{lemma}[thm]{Lemma}
\newtheorem{cor}[thm]{Corollary}
\theoremstyle{definition}
\newtheorem{definition}[thm]{Definition}
\newtheorem{Ex}[thm]{Example}
\begin{document}
\title{Positivity of cotangent bundles}
\author{Kelly Jabbusch}
\address{Mathematisches Institut, Universit\"at zu K\"oln, Weyertal 86-90, 50931 K\"oln, Germany}
\email{kjabbusc@math.uni-koeln.de}
\date{\today}
\subjclass[2000]{Primary 14J10}
\maketitle

\section{Introduction}
Let $X$ be a projective scheme over an algebraically closed field.  Given a vector bundle $\sE$ on $X$, we can consider various notions of positivity for $\sE$, such as ample, nef and big. As a particular example, consider a smooth projective variety $X$ and its cotangent bundle $\OM_X$.  When $\OM_X$ is ample, $X$ has some very nice properties.  For example, all subvarieties of $X$ are of general type and $X$ is algebraically hyperbolic, so in particular, $X$ does not contain rational or elliptic curves,  there do not exist non-constant maps $f: A \to X$, where $A$ is an abelian variety, and $X$ is Kobayashi hyperbolic, \cite{D97}.  Requiring that the cotangent bundle be ample is certainly a very strong property, and for a long time there were very few examples of such varieties, although they were expected to be reasonably abundant.  One such example was constructed by Michael Schneider.
\begin{thm}\textup{\cite{s86}}\label{A} 
Let $f : X \to Y$ be a smooth projective nonisotrivial morphism, where $X$ and $Y$ are smooth projective varieties over $\C$, of dimensions 2 and 1, respectively.  Suppose for all $y \in Y$, the Kodaira-Spencer map $\rho_{f,y}: T_{Y,y} \to \coh 1.X_y.T_{X_y}.$ is non-zero, then $\OM_X$ is ample.
\end{thm}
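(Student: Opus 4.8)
The plan is to write $\OM_X$ as an extension of line bundles using the relative cotangent sequence
\[
0\rtarr f^{*}\OM_Y\rtarr\OM_X\rtarr\omega_{X/Y}\rtarr 0,
\]
which is exact and consists of line bundles because the fibres $X_y$ are curves, and then to prove ampleness of $\OM_X$ by verifying the Nakai--Moishezon criterion for $\xi:=\O_{\mathbb P(\OM_X)}(1)$ on the threefold $P:=\mathbb P(\OM_X)$. As preliminary input I would note that $f$, being smooth, is a $C^{\infty}$ fibre bundle (Ehresmann), so has no singular fibres; Arakelov's theorem then gives $\deg f_{*}\omega_{X/Y}>0$, which forces $2g(Y)-2>0$ and $2g(X_y)-2>0$, so $\OM_Y$ is ample and $f^{*}\OM_Y$ is nef. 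Moreover $\omega_{X/Y}$ is nef (Arakelov semipositivity) and $\omega_{X/Y}^{2}=12\deg f_{*}\omega_{X/Y}>0$, hence nef and big; in fact it is \emph{ample}, since $\omega_{X/Y}\cdot X_y=2g(X_y)-2>0$ and an irreducible horizontal curve $C$ with $\omega_{X/Y}\cdot C=0$ would, by adjunction on $X$ and Riemann--Hurwitz for $C\to Y$, have $C^{2}\geq 0$, contradicting that it lies in the null locus of the nef and big divisor $\omega_{X/Y}$ (where $C^{2}<0$). Nakai's criterion then also gives $K_X=f^{*}K_Y+\omega_{X/Y}$ ample.

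The Kodaira--Spencer hypothesis enters through fibrewise ampleness. Restricting the cotangent sequence to $X_y$ gives $0\to\O_{X_y}\to\OM_X|_{X_y}\to\omega_{X_y}\to 0$, whose extension class in $\Ext^{1}(\omega_{X_y},\O_{X_y})=\coh 1.X_y.T_{X_y}.$ vanishes if and only if $\rho_{f,y}=0$; so by hypothesis it is non-split for every $y$. A non-split extension $E$ of $\omega_{X_y}$ by $\O_{X_y}$ on a curve of genus $\geq 2$ has every quotient line bundle of positive degree: a sub-line-bundle of $E$ either factors through $\O_{X_y}$, of degree $\leq 0$, or maps injectively onto $\omega_{X_y}$ with non-zero cokernel, of degree $\leq 2g(X_y)-3$; in either case its degree is $<2g(X_y)-2=\deg\det E$. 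Hence $\OM_X|_{X_y}$ is ample for every $y$, and more generally $\OM_X|_C$ is ample for every irreducible curve $C\subset X$: for $C$ horizontal this follows because, pulled back to the normalisation of $C$, the cotangent sequence presents $\OM_X|_C$ as an extension of a line bundle of degree $\omega_{X/Y}\cdot C>0$ by one of degree $(2g(Y)-2)(C\cdot X_y)>0$.

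Now the Nakai criterion on $P$. Since $\OM_X|_C$ is ample for every curve $C$, $\xi\cdot\Gamma>0$ for every curve $\Gamma\subset P$ (either $\Gamma$ lies in a $\mathbb P^{1}$-fibre of $\pi:P\to X$, or $\Gamma$ maps finitely onto some $C$ and $\O_{\mathbb P(\OM_X|_C)}(1)$ is ample). One computes $\xi^{3}=c_1(\OM_X)^{2}-c_2(\OM_X)=K_X^{2}-e(X)=\omega_{X/Y}^{2}+(2g(Y)-2)(2g(X_y)-2)>0$, using $e(X)=e(Y)e(X_y)$ for the smooth fibre bundle $X\to Y$. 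For a surface $D\subset P$ contracted by $\pi$ onto a curve $B$ one has $\xi^{2}\cdot D=\deg(\det\OM_X|_B)=K_X\cdot B>0$; and for $D$ dominating $X$, say of degree $a$ with $\O_P(D)=\O_P(a)\otimes\pi^{*}\O(\delta)$, one gets $\xi^{2}\cdot D=a\,\xi^{3}+K_X\cdot\delta$, and effectivity of $D$ provides a sub-line-bundle $M\subseteq\sym^{a}\OM_X$ with $M+\delta$ effective, so it suffices to prove $K_X\cdot M<a\,\xi^{3}=a(K_X^{2}-e(X))$ for every sub-line-bundle $M$ of $\sym^{a}\OM_X$.

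This last inequality is, I expect, the main obstacle. The natural route is to prove $\OM_X$ is $\mu_{K_X}$-semistable; in characteristic $0$ this passes to $\sym^{a}\OM_X$, whose $K_X$-slope is $\tfrac a2 K_X^{2}$, so $K_X\cdot M\leq\tfrac a2 K_X^{2}$, and $\tfrac a2 K_X^{2}<a(K_X^{2}-e(X))$ precisely because $K_X^{2}-2e(X)=\omega_{X/Y}^{2}>0$ (non-isotriviality again). So one must rule out a destabilising sub-line-bundle $L\subseteq\OM_X$: by the cotangent sequence, $L$ is either pulled back from a sub-line-bundle of $\OM_Y$ — too small, as $K_X\cdot L\leq(2g(Y)-2)(2g(X_y)-2)<\tfrac12 K_X^{2}$ — or it injects into $\omega_{X/Y}$ with effective non-zero cokernel of small $K_X$-degree, and this remaining possibility is what the nowhere-vanishing of $\rho_{f}$ (rather than merely its generic non-vanishing) should eliminate, by keeping the corresponding restriction of the extension class non-zero. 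In geometric terms, the surfaces in $P$ one must control are those transverse to both $f$ and $\pi$; the two hypotheses cooperate to remove them — everywhere non-vanishing Kodaira--Spencer prevents a whole fibre of $P\to Y$ from lying in the null locus of $\xi$, while strict non-isotriviality (bigness and ampleness of $\omega_{X/Y}$) prevents the horizontal degenerations. That neither hypothesis alone would do is visible from Mumford's nef line bundles of zero self-intersection, for which nefness of $\OM_X$ together with fibrewise ampleness still fails to yield ampleness.
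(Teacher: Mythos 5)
The paper does not in fact prove Theorem~\ref{A}: it is quoted from Schneider \cite{s86} without proof, and the closest statement the paper establishes by its own means is Theorem~\ref{above}, which under these hypotheses yields only the strictly weaker conclusions that $\OM_X$ is quasi-ample and $\sO_{\P(\OM_X)}(1)$ is big (the paper explicitly flags as open whether this combination implies ampleness). The portions of your argument that overlap with the paper are fine and essentially identical: the non-split restriction of $\OM_X$ to each fibre and Gieseker's lemma give fibrewise ampleness exactly as in Lemma~\ref{fiber} and Corollary~\ref{imagefiber}; ampleness of $\om_{X/Y}$ plus $f^*\om_Y$ nef gives ampleness of $\ga^*\OM_X$ for horizontal curves as in Theorem~\ref{above}; and your $\xi^3 = K_X^2 - e(X) = \om_{X/Y}^2 + (2g(Y)-2)(2g(X_y)-2) > 0$ is the paper's positive Schur number $\int_X s_{(1,1)}(\OM_X)$. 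What you add, and what Schneider's theorem actually requires, is the surface case of Nakai--Moishezon on $\P(\OM_X)$, and that is precisely where your proof has a genuine gap.

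The gap is the claimed $\mu_{K_X}$-semistability of $\OM_X$. You correctly reduce $\xi^2\cdot D>0$, for an irreducible $D$ dominating $X$ with $\sO_P(D)\cong\sO_P(a)\ten\pi^*\sO_X(\de)$, to the inequality $K_X\cdot M<a(K_X^2-e(X))$ for every sub-line-bundle $M\subseteq\sym^a\OM_X$, and observe that $\mu_{K_X}$-semistability would give $K_X\cdot M\le\tfrac a2 K_X^2$, which suffices since $K_X^2-2e(X)=\om_{X/Y}^2>0$. But the semistability is never actually proved. You dispose of a destabilising $L\subseteq\OM_X$ that factors through $f^*\OM_Y$, but for the remaining case --- $L$ injecting into $\om_{X/Y}$ with non-zero effective cokernel $E$ --- you say only that the nowhere-vanishing Kodaira--Spencer class \emph{should} eliminate it. It eliminates $E=0$ (a global splitting) and $E$ vertical (some fibre $X_y$ would miss $E$ and the restricted extension would split, contradicting $\rho_{f,y}\ne 0$), but a \emph{horizontal} component of $E$ meets every fibre and is not excluded by this reasoning; to rule it out you would need the quantitative bound $K_X\cdot E\ge\tfrac12\om_{X/Y}^2$, which does not follow from $E>0$ and $K_X$ ample. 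As written the argument therefore breaks at its central step. The needed semistability is true --- once $K_X$ is ample (which you do establish), $\OM_X$ is $K_X$-polystable by the Aubin--Yau K\"ahler--Einstein theorem, and strict polystability is excluded because your fibrewise non-splitting makes $\OM_X|_{X_y}$ indecomposable --- but that is a substantial transcendental input, not the elementary slope computation your sketch suggests, and it needs to be stated and justified rather than waved at.
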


Note that certain Kodaira surfaces satisfy the condition above.  In this paper, we generalize Theorem \ref{A} to varieties of higher dimensions.  To do so,  we will introduce a slightly weaker notion of ampleness, which we will call ``quasi-ample" and ``quasi-ample with respect to an open subset $U$" (see Definitions \ref{defqample} and \ref{qamplewrtU}).   Using this notion we extend Schneider's result to varieties of higher dimension.

\begin{thm} \label{B}
Let
\[ \xymatrix{X^n \ar[r]^{f_n} & X^{n-1} \ar[r]^{f_{n-1}} & X^{n-2} \ar[r]^{f_{n-2}} & \cdots \cdots \ar[r]^{f_3} & X^2 \ar[r]^{f_2} & X^1} \]
where each $X^i$ is a smooth projective variety over $\C$ of dimension $i$, and each $f_i:X^i \to X^{i-1}$ is a smooth, projective morphism with $\var(f_i) = i-1$. Then,  $\OM_{X^n}$ is nef and  quasi-ample with respect to an open $U_n$ (precisely defined below in Theorem \ref{tower1}) and $\sO_{\P(\OM_{X^n})}(1)$ is a big line bundle on $\P(\OM_{X^n})$. 
\end{thm}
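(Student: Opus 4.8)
The plan is to proceed by induction on the length $n$ of the tower, with the base case $n=2$ being (a mild repackaging of) Schneider's Theorem~\ref{A}. The inductive engine is the standard conormal exact sequence for the smooth morphism $f = f_n : X^n \to X^{n-1}$,
\[
0 \longrightarrow f^*\OM_{X^{n-1}} \longrightarrow \OM_{X^n} \longrightarrow \OM_{X^n/X^{n-1}} \longrightarrow 0 ,
\]
which exhibits $\OM_{X^n}$ as an extension of the relative cotangent bundle (a line bundle, since the fibers are curves when $n=2$, but in general a rank-one sheaf only after we peel off one level at a time — so it is cleanest to think of the whole tower as built from successive relative dimension-one pieces $X^i \to X^{i-1}$) by the pullback of $\OM_{X^{n-1}}$. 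The key point is that positivity of $\OM_{X^n}$ should follow from combining (i) the inductively known positivity of $f^*\OM_{X^{n-1}}$ along directions transverse to the fibers, and (ii) positivity of $\OM_{X^n/X^{n-1}}$ along the fibers, the latter coming precisely from the hypothesis $\var(f_n) = n-1$, i.e. maximal variation, via a Viehweg–Zuo / Kodaira–Spencer type argument as in Schneider's original proof.

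First I would set up the projectivized bundle $\P(\OM_{X^n})$ with its tautological quotient $\sO(1)$, and translate the three desired conclusions — nef, quasi-ample with respect to $U_n$, and bigness of $\sO_{\P(\OM_{X^n})}(1)$ — into statements about this line bundle. Nefness of $\OM_{X^n}$ means $\sO(1)$ is nef on $\P(\OM_{X^n})$; bigness means $\sO(1)$ has top self-intersection positive, equivalently its section ring has maximal Iitaka dimension; and quasi-ampleness with respect to $U_n$ (per Definitions~\ref{defqample}, \ref{qamplewrtU}) should say that $\sO(1)$ is ample on the preimage of $U_n$, or restricts to an ample bundle away from a controlled bad locus. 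I would then use the surjection $\OM_{X^n} \onto \OM_{X^n/X^{n-1}}$ and the inclusion $f^*\OM_{X^{n-1}} \into \OM_{X^n}$ to relate $\P(\OM_{X^n/X^{n-1}})$ and $\P(f^*\OM_{X^{n-1}}) = X^n \times_{X^{n-1}} \P(\OM_{X^{n-1}})$ to $\P(\OM_{X^n})$ as, respectively, a closed subscheme and a ``quotient at the generic point,'' so that curves in $\P(\OM_{X^n})$ can be analyzed according to whether they lie in the fiber direction or map dominantly to $\P(\OM_{X^{n-1}})$.

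The main steps, in order: (1) Base case: restate Theorem~\ref{A} in the language of $\sO(1)$ on $\P(\OM_{X^2})$, identifying the open set $U_2$ (here essentially all of $X^2$, since $\OM_{X^2}$ is genuinely ample) and checking bigness of $\sO(1)$. (2) Inductive step, nefness: show that any irreducible curve $C \subset \P(\OM_{X^n})$ has $\sO(1)\cdot C \ge 0$ by splitting into the case where $C$ lies in a fiber of $\P(\OM_{X^n}) \to \P(\OM_{X^{n-1}})$-direction — handled by the maximal-variation hypothesis on $f_n$ applied fiberwise, which forces the relative cotangent positivity — and the case where $C$ dominates something in the base, handled by pulling back the inductive nefness of $\OM_{X^{n-1}}$ along the quotient $\OM_{X^n} \to $ (generically) $f^*\OM_{X^{n-1}}$. (3) Inductive step, quasi-ampleness with respect to $U_n$: define $U_n$ as the preimage in $X^n$ of $U_{n-1}$ intersected with the locus over which the fiberwise Kodaira–Spencer data is maximally nondegenerate, then upgrade nefness to strict positivity of $\sO(1)$ over $U_n$ by a Nakai–Moishezon or seshadri-type argument, controlling the finitely many curve classes where positivity could degenerate. (4) Bigness: compute $(\sO_{\P(\OM_{X^n})}(1))^{\dim \P(\OM_{X^n})} > 0$ by the projection formula, reducing it to a Segre-class computation involving $\OM_{X^{n-1}}$ (inductively big) and the relative term $\OM_{X^n/X^{n-1}}$ (positive by variation), using the filtration above to show the intersection number is bounded below by a product of the corresponding numbers one level down times a strictly positive relative contribution.

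The hard part will be step (3): propagating quasi-ampleness with respect to an open set through the tower. Nefness is essentially formal given the exact sequence and the inductive hypothesis, and bigness is a numerical computation; but controlling exactly which curves fail to have positive degree — and showing this bad locus is contained in the complement of an explicitly describable open $U_n$ that shrinks in a controlled way as $n$ grows — requires a careful analysis of when the extension $0 \to f_n^*\OM_{X^{n-1}} \to \OM_{X^n} \to \OM_{X^n/X^{n-1}} \to 0$ fails to be ``positively nonsplit,'' i.e. when a curve in $\P(\OM_{X^n})$ can avoid both the ample directions coming from the base and the variation-induced positivity in the fiber. This is where Schneider's Kodaira–Spencer hypothesis must be invoked in its sharpest form at each stage, and where the precise statement of Theorem~\ref{tower1} (which pins down $U_n$) does the real work; I would expect the inductive definition of $U_n$ to be forced on us by exactly the curves that appear as obstructions in this step.
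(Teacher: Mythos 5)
Your plan has the right skeleton --- induction on the height $n$ of the tower, using the conormal sequence
\[
0 \longrightarrow f_n^*\OM_{X^{n-1}} \longrightarrow \OM_{X^n} \longrightarrow \om_{X^n/X^{n-1}} \longrightarrow 0,
\]
and splitting curves into fiber-contained versus horizontal --- and the nefness and bigness inductions do work out roughly as you sketch (the paper phrases bigness in terms of positivity of all Schur polynomials $s_\la(\OM_{X^n})$, of which your Segre-class computation is the $\la=(1^n)$ special case). But there are two concrete gaps in the parts you flag as hard, and naming them matters because they are where the actual content of the theorem lives.

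First, in the fiber-contained case, your plan (``maximal variation applied fiberwise forces the relative cotangent positivity'') does not give what you need. If $\ga(C)\subseteq X_y$ is contained in a fiber of $f_n$, then $f_n^*\OM_{X^{n-1}}|_{X_y}\cong\sO_{X_y}^{\oplus(n-1)}$ is \emph{trivial}, so the conormal filtration degenerates to a trivial subbundle inside $\OM_{X^n}|_{X_y}$; ampleness of the relative piece $\om_{X_y}$ alone says nothing. The paper's fix is to project all the way to $X^1$: writing $h=f_2\cdots f_n$ and $s=h$(a point of $X_y$), one gets
\[
0 \longrightarrow \sO_{X_y} \longrightarrow \OM_{X^n}|_{X_y} \longrightarrow \OM_{X_s}|_{X_y} \longrightarrow 0,
\]
proves that this sequence is non-split using the Kodaira--Spencer-type injectivity that defines the open $U_n$ (this is exactly what the sets $B_i$ are encoding), proves that $\OM_{X_s}|_{X_y}$ is ample by a separate induction on the shorter tower $X^n_s\to\cdots\to X^2_s$ of fibers over $s$ (which requires a comparison of the $B_i$ and $U_i$ sets with their restrictions $B_{i,s}$, $U_{i,s}$ to $X_s$), and then invokes Gieseker's lemma: a non-split extension of an ample bundle by $\sO_C$ on a curve is ample. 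None of this is ``nefness upgraded to strict positivity by Nakai--Moishezon''; it is a genuinely different mechanism, and it is where the hypothesis on the Kodaira--Spencer maps is actually used.

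Second, in both the nefness step and the horizontal case of quasi-ampleness, you cannot get positivity of $\om_{X^n/X^{n-1}}$ (as opposed to $\om_{X^n/X^{n-1}}|_{X_y}$) from the fiberwise picture alone. You need the deep positivity theorems for direct images of relative pluricanonical sheaves: for nefness, that $(f_n)_*\om_{X^n/X^{n-1}}^m$ is nef and that $f_n^*(f_n)_*\om^m\to\om^m$ is surjective for $m\gg 0$; for quasi-ampleness along a horizontal curve $\ga$, the Viehweg--Zuo statement (\ref{pushforward}) that $(f_n)_*\om_{X^n/X^{n-1}}^m$ is ample \emph{with respect to} an open subset of $X^{n-1}$ containing $f_n\ga(C)$, from which $\ga^*\om_{X^n/X^{n-1}}$ ample follows by pullback and surjectivity. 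Your proposal treats these as formal consequences of $\var(f_n)=n-1$; they are not formal, and the theorem does not go through without them.
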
 

We also extend this result to towers of varieties
\[ \xymatrix{X^n \ar[r]^{f_n} & X^{n-1} \ar[r]^{f_{n-1}} & X^{n-2} \ar[r]^{f_{n-2}} & \cdots \cdots \ar[r]^{f_3} & X^2 \ar[r]^{f_2} & X^1} \]
where the $f_i$  are not necessarily smooth (Theorem \ref{notsmoothtower}), and show that $\OM^1_{X^n}(\log D)$ is quasi-ample with respect to an open set, where $D$ is a suitable divisor taking in account the singularities of the given morphisms.  

In Section \ref{subsec:ex} we construct a tower of varieties satisfying the conditions of Theorem \ref{tower2}  using a construction due to Kodaira which for any $n$ produces a $g$ for which $\sM_g$ contains a complete $n$-dimensional subvariety.  

\noindent \textbf{Acknowledgments.}  Many of the results that appear in this paper originally appeared in my doctoral dissertation at the University of Washington.  I would like to thank my advisor S\'andor Kov\'acs for all his guidance and support.

\subsection{Definitions and Examples}
Recall that a vector bundle $\sE$ on a  proper scheme $X$ is \emph{ample} if for every coherent sheaf $\sF$, there is an integer $m_0 >0$, such that for every $m \geq m_0$, the sheaf $\sF \ten \sym^m \sE$ is generated as an $\sO_X$-module by its global sections.  Equivalently, $\sE$ is ample if the tautological line bundle $\sO_{\P(\sE)}(1)$ on $\P(\sE)$ is ample.  

A vector bundle $\sE$ on a  proper scheme $X$ is \emph{nef (or semipositive)} if for every complete nonsingular curve $C$ and map $\ga: C \to X$, every quotient bundle $\sQ$ of $\ga^* \sE$ has degree at least zero. Equivalently, $\sE$ is nef if the tautological line bundle $\sO_{\P(\sE)}(1)$ on $\P(\sE)$ is nef.  

Ample, respectively nef, vector bundles, have many nice properties.  For example, quotients of ample (respectively nef) vector bundles are ample (respectively nef) and extensions of ample (respectively nef) vector bundles are ample (respectively nef).  For more properties see \cite{H66} or \cite[Sec 6.1A and 6.1B]{LII}.  

Let $\sE$ be a vector bundle of rank $r$ on an irreducible projective variety $X$ of dimension $n$.  Following the work of Fulton and Lazarsfeld \cite{FL} we can introduce a type of numerical positivity.  More precisely, starting with a weighted homogeneous polynomial $P\in \Q[c_1, \ldots, c_r]$, we get a Chern number \[ \int_X P(c(\sE)) := \int_X P(c_1(\sE), \ldots, c_r(\sE)). \]  

\begin{definition} Let $\LA(n,r)$ be the set of all partitions of $n$ by non-negative integers less than or equal to $r$.   Then for every $\la = (\la_1, \ldots, \la_n) \in \LA(n,r)$ we can form the \emph{Schur polynomial}, $s_\la \in \Q[c_1, \ldots, c_r]$, of weighted degree $n$, which is the determinant of the $n \times n$ matrix:
\[
\left| \begin{array}{cccc} c_{\la_1} & c_{\la_1+1} & \ldots & c_{\la_1+n-1}\\ 
c_{\la_2-1} & c_{\la_2} & \ldots & c_{\la_2+n-2}\\
\vdots & \vdots & \vdots & \vdots \\
c_{\la_n-n} & c_{\la_n-n+1} & \ldots & c_{\la_n}\\ \end{array} \right| \]

\noindent where $c_0=1$ and $c_i=0$ if $i \notin [0,r]$.  
\end{definition}
 In particular, if $P$ is a positive linear combination of Schur polynomials and $\sE$ is ample (respectively nef), then $ \int_X P(c(\sE)) > 0$ (respectively $\geq$).

We next generalize the notion of big for vector bundles.  Recall that a divisor $D$ is \emph{big} if there exists a $c>0$, such that $\dimcoh 0.X.\sO_X(mD). > cm^n$ for  $m \gg 1$.  Generalizing this notion to vector bundles is not consistent in the literature.  As a first definition, we generalize the notion as we did with ample and nef.  This is the definition given for example in \cite{LII} and \cite{D05}.  

\begin{definition}\label{big1} \cite[6.1.23]{LII} . Let $\sE$ be a vector bundle on $X$, then $\sE$ is \emph{L-big} if $\sO_{\P(\sE)}(1)$ is a big line bundle on $\P(\sE)$.  (Here, \emph{L-big} is used instead of \emph{big} to avoid confusion with the second definition given below.) \end{definition}

\begin{Ex} \label{Ex1} As an example, consider the rank $2$ vector bundle $\sE= \sO_{\P^1} \oplus \sO_{\P^1}(1)$ on $\P^1$.  Since a direct sum of line bundles is L-big if and only if some $\N-$linear combination of the direct summands is a big line bundle  \cite[2.3.2(iv)]{LII}, we see that $\sE$ is L-big.
Note in this example $\sE$ is L-big but it has a quotient $\sO_{\P^1}$, which is not big.

It is also useful to see that $\sO_{\P(\sE)}(1)$ is big in a slightly different way (which we will refer to in Example \ref{Ex2}).  First note that $\P(\sE) \iso \P(\sE')$ where $\sE' := \sE \ten \sO_{\P^1}(-1) \iso \sO_{\P^1} \oplus \sO_{\P^1}(-1)$. Fix a section $C_0$ of $X=\P(\sE')$ with $\sO_X(C_0) \iso \sO_{\P(\sE')}(1)$, and let $f$ be a fiber.  Then $\sO_{\P(\sE)}(1)$ corresponds to a section $C_1$, which is linearly equivalent to $C_0+f$.    Given $A=aC_0+bf$ ample, then $a>0$ and $b>a$ \cite[V.2.18]{H77}, so that $bC_1 - A = (b-a)C_0$ is effective, and hence $\sO_{\P(\sE)}(1)$ is big.   \end{Ex}

We next turn to a different generalization of big to vector bundles introduced by Viehweg. To do so, we first need a generalization of nef.

\begin{definition}\label{big2}\cite{V83} Let  $\sE$ be vector bundle on a projective variety $X$ and let $\sH$ be an ample line bundle.
\begin{enumerate}
\item $\sE$ is \emph{weakly positive over an open $U$} if for every $a>0$, there exists $b>0$ such that $\sym^{ab}(\sE) \ten \sH^b$ is globally generated over $U$, that is $\coh 0.X.\sym^{ab}(\sE) \ten \sH^b. \ten \sO_X \to \sym^{ab}(\sE) \ten \sH^b$ is surjective over $U$.
\item $\sE$ is \emph{V-big (or ample with respect to $U$)} if there exists an open dense $U$ in $X$ and  $c>0$ such that $\sym ^{c} \sE \ten \sH^{-1}$ is weakly positive over $U$.  
\end{enumerate}
\end{definition}

Equivalently, a vector bundle $\sE$ is ample with respect to $U$ if and only if the tautological bundle $\sO_{\P(\sE)}(1)$ is ample with respect to $\pi^{-1}(U)$ (where $\pi: \P(\sE) \to X$), \cite[3.4]{V89}.  At first glance it seems that these two generalizations of big to vector bundles may be equivalent, however the second notion of big, V-big, is strictly stronger than L-big.  

\begin{Ex} \label{Ex2} Consider the above Example \ref{Ex1}, where we view $\sO_{\P(\sE)}(1)$ as $C_1 = C_0 + f$.  Then $C_1$ is ample with respect to $V=X - C_0$. But $\pi(V)=\P^1$, so there is no open $U \subset \P^1$ with $\pi^{-1}(U)=V$.  Thus $\sE = \sO_{\P^1} \oplus \sO_{\P^1}(1)$ is L-big but not V-big. \end{Ex}

In general, we have that if $\sE \to \sQ$ is surjective over an open set $U$ and $\sE$ is ample with respect to $U$, then $\sQ$ is also ample with respect to $U$ \cite[3.30]{V89}.  As noted in Example \ref{Ex1}, L-big does not have this property, so this again shows that the V-big is stronger than L-big.

\begin{Ex} \label{2ex} As a second example, let $C$ be a curve of genus greater than one and let $X = C \times C$, with projections $p_1$ and $p_2$.  In this case, the cotangent bundle $\OM_X$ is L-big, that is $\O_{\P(\OM_X)}(1)$ is a big line bundle on $\P(\OM_X)$, but since $\OM_X$ surjects onto $p_1^* \om_C$, we see that $\OM_X$ is not V-big.  Similarly, we can see that the two definitions of big are not equivalent for the tangent bundle.  Consider the smooth quadric surface $Q$, then $T_Q$ is L-big, but $T_Q$ surjects onto $p_1^*\om_{\P^1}^{-1}$, so it is not V-big.  \end{Ex}

In view of these different definitions we will avoid saying that a vector bundle $\sE$  on $X$ is ``big'' and instead say that either $\sO_{\P(\sE)}(1)$ is a big line bundle or $\sE$ is ample with respect to some open set $U$.

We next define a new notion of positivity that is slightly weaker than ample, but stronger than nef.

\begin{definition} \label{defqample}
A vector bundle $\sE$  on $X$ is  \emph{quasi-ample} if for every non-constant morphism $\ga: C \to X$ from a complete nonsingular curve $C$, $\ga^*\sE$ is ample on $C$.  
\end{definition}

In the case where $\sE$ is a line bundle, the terminology \emph{strictly nef} has been used \cite{S95}.  
Many properties of ample vector bundles carry over to quasi-ample vector bundles.  For example,

\begin{thm}\label{qampleprop} \textup{\cite[4.3]{J07}}
Let $\sE$ and $\sE'$ be a vector bundles on $X$
\begin{enumerate}
\item If $\sE$ is quasi-ample then any quotient of $\sE$ is quasi-ample.
\item If $\sym^m \sE$ is quasi-ample for some $m$, then $\sE$ is quasi-ample.
\item If $\sE$ is quasi-ample, then $\sym^m\sE$ and $\sE^m$ are quasi-ample for every $m>0$, and $\LA^m\sE$ is quasi-ample for $m=1, 2, \ldots, r$, where $r$ is the rank of $\sE$. 
\item If $\sE$ and $\sE'$ are quasi-ample, then $\sE \ten \sE'$ is quasi-ample.
\item Let 
\[ \sesshort \sE'.\sE.\sE''. \]
be an exact sequence of vector bundles on $X$.
If $\sE'$ and $\sE''$ are quasi-ample, then $\sE$ is quasi-ample.
\item Let $\sE$ be quasi-ample, and let $Y$ be a subscheme of $X$.  Then $\sE|_Y$ is quasi-ample on $Y$.
\end{enumerate}
\end{thm}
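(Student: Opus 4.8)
The plan is to exploit the fact that quasi-ampleness is, by definition, a condition checked one smooth complete curve at a time, together with the observation that every operation appearing in (1)--(6) commutes with pullback along a morphism $\ga\colon C\to X$. So for each item I would fix an arbitrary non-constant $\ga\colon C\to X$ from a smooth complete curve $C$, record that $\ga^{*}\sE$ (and, where relevant, $\ga^{*}\sE'$) is ample on $C$, and then quote the corresponding closure property for honestly ample bundles on the curve $C$ from Hartshorne \cite{H66}. In short, the lemma asserts precisely that the standard closure properties of ample bundles survive the reformulation ``ample after restriction to every curve''.

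Four of the items are then immediate. For (1): pullback is right exact, so $\ga^{*}\sE\onto\ga^{*}\sQ$, and a quotient of the ample bundle $\ga^{*}\sE$ is ample. For (6): a non-constant $\ga\colon C\to Y$ composed with the inclusion $Y\into X$ is still non-constant, and $\ga^{*}(\sE|_{Y})$ is the pullback of $\sE$ along that composite, hence ample. For (4): $\ga^{*}(\sE\ten\sE')=\ga^{*}\sE\ten\ga^{*}\sE'$ is a tensor product of ample bundles on $C$, hence ample. For (5): since $\sE''$ is locally free, $\sesshort \sE'.\sE.\sE''.$ is locally split and so remains exact after $\ga^{*}$, whence $\ga^{*}\sE$ is an extension of the ample bundle $\ga^{*}\sE''$ by the ample bundle $\ga^{*}\sE'$ and is therefore ample. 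For (3), I would first get that $\sE^{\ten m}$ is quasi-ample by (4) and induction on $m$, then deduce the rest from (1), since $\sym^{m}\sE$ and $\LA^{m}\sE$ (the latter nonzero for $1\le m\le r$) are quotients of $\sE^{\ten m}$; alternatively one pulls back directly, using that $\sym^{m}$ and $\LA^{m}$ commute with $\ga^{*}$ and preserve ampleness on curves.

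The only item that is not a one-line pullback argument is (2), and this is where the real content lies. After pulling back it asks: if $\sym^{m}(\ga^{*}\sE)=\ga^{*}(\sym^{m}\sE)$ is ample on $C$, is $\ga^{*}\sE$ ample? I would invoke the standard fact that $\sym^{m}\sF$ ample implies $\sF$ ample for a vector bundle $\sF$ \cite{H66}; its proof is the one non-formal ingredient, via the relative Veronese closed immersion $\P(\ga^{*}\sE)\into\P(\sym^{m}(\ga^{*}\sE))$ (induced on rank-one quotients by $\sE\mapsto\sym^{m}\sE$) along which $\sO_{\P(\sym^{m}(\ga^{*}\sE))}(1)$ restricts to $\sO_{\P(\ga^{*}\sE)}(m)$, so that ampleness of the former forces ampleness of $\sO_{\P(\ga^{*}\sE)}(m)$ and hence of $\sO_{\P(\ga^{*}\sE)}(1)$, i.e.\ of $\ga^{*}\sE$. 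Thus I expect the passage ``$\sym^{m}\sF$ ample $\Rightarrow$ $\sF$ ample'' to be the main obstacle; every other assertion is formal pullback bookkeeping on top of the corresponding property of ample bundles on a smooth projective curve.
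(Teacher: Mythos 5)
Your proposal is correct and takes exactly the approach the paper does: the paper proves only item (1) by pulling back along an arbitrary non-constant $\ga\colon C\to X$ and invoking the quotient property of ample bundles on $C$, then remarks that the remaining items ``follow similarly'' from the corresponding closure properties of ample bundles. Your expanded treatment of each item, in particular flagging (2) as the one genuine use of a non-formal fact (``$\sym^m\sF$ ample $\Rightarrow$ $\sF$ ample'' via the Veronese), is consistent with, and a helpful elaboration of, the paper's one-line reduction.
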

\begin{proof}  We give the proof of $(1)$; the others follow similarly.  Let $\sE$ be a quasi-ample vector bundle on $X$ and $\sQ$ a quotient of $\sE$.  If $\ga: C \to X$ is any non-constant morphism from a complete non-singular curve, then $\ga^* \sE$ is an ample vector bundle with quotient $\ga^* \sQ$, so $\ga^* \sQ$ is also ample.  Hence $\sQ$ is quasi-ample.
\end{proof}

\noindent We also have the following criteria for when a quasi-ample bundle is ample, originally due to Gieseker, see \cite[6.1.7]{LII}, for Gieseker's original statement and proof or \cite[4.7]{J07}.

\begin{thm} \label{qaisa} Let $\sE$ be a vector bundle on $X$, where $X$ is proper over a field $k$.  Then $\sE$ is ample if and only if the two following conditions are satisfied:
\begin{enumerate}
\item There exists an $m_0>0$ such that $\sym ^m \sE$ is generated by global sections for all $m \geq m_0$, and
\item $\sE$ is quasi-ample.
\end{enumerate}
\end{thm}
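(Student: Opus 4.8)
The plan is to test ampleness of $\sE$ through the tautological line bundle $L := \sO_{\P(\sE)}(1)$ on $P := \P(\sE)$, with structure map $\pi : P \to X$, and prove the two implications separately. The ``only if'' direction is the easy one: if $\sE$ is ample then (1) is immediate from the definition of ampleness applied to $\sF = \sO_X$, and for (2) I would observe that a non-constant morphism $\ga : C \to X$ from a complete nonsingular curve is automatically finite (proper, with finite fibres since $C$ is irreducible), so the base change $\P(\ga^*\sE) \to \P(\sE)$ is finite; as the pullback of an ample line bundle under a finite morphism is ample, $\sO_{\P(\ga^*\sE)}(1)$ is ample, i.e.\ $\ga^*\sE$ is ample, so $\sE$ is quasi-ample.

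For the converse, assume (1) and (2). First I would deduce that $L$ is semiample: global generation of $\sym^m\sE$ on $X$ forces global generation of $\pi^*\sym^m\sE$ on $P$, hence of its quotient $L^{\ten m}$, for every $m \geq m_0$. Fix $N \geq m_0$ and let $\phi := \phi_{|L^{\ten N}|} : P \to Z$ be the morphism onto the image of $P$ in projective space, so that $L^{\ten N} \iso \phi^*\sO_Z(1)$. It then suffices to show $\phi$ is finite, for then $L^{\ten N}$ is the pullback of an ample bundle under a finite morphism, hence ample, and $\sE$ is ample. Since $\phi$ is proper, finiteness will follow once I show $\phi$ contracts no curve, equivalently $L \cdot \Gamma > 0$ for every irreducible curve $\Gamma \subset P$.

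To handle a given $\Gamma$, let $\nu : C \to P$ be the normalisation of $\Gamma$ composed with the inclusion --- a finite map onto $\Gamma$, so $\deg \nu^*L$ and $L\cdot\Gamma$ have the same sign --- and put $\ga := \pi \com \nu : C \to X$. If $\Gamma$ lies in a fibre $\pi^{-1}(x) \iso \P^{r-1}$, then $\nu^*L$ is the pullback of the ample bundle $\sO_{\P^{r-1}}(1)$, so $\deg\nu^*L > 0$. If not, $\ga$ is non-constant, so $\ga^*\sE$ is ample on $C$ by hypothesis (2); since $\nu$ lies over $\ga$, the universal property of $\P(\sE)$ exhibits $\nu^*L$ as a line-bundle quotient of $\ga^*\sE$, and a quotient of an ample bundle on a curve has positive degree. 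Either way $L\cdot\Gamma > 0$, finishing the argument.

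The real content is all in the converse, and the main obstacle is the passage from ``$\sE$ quasi-ample on $X$'' to ``$L$ strictly positive on every curve of $\P(\sE)$'', together with the conceptual input that a semiample line bundle which is strictly positive on all curves is ample (realised here by finiteness of the semiample fibration $\phi$). The delicate point is the dichotomy for curves $\Gamma \subset \P(\sE)$: curves inside a fibre are dispatched using ampleness of $\sO_{\P^{r-1}}(1)$, while curves whose image in $X$ is a curve are handled through the universal property of $\P(\sE)$, which converts ampleness of $\ga^*\sE$ into positivity of the degree of $\nu^*L$. I would also want to confirm that no reduction to the reduced or irreducible case is needed; it should not be, since finiteness of $\phi$ is a condition on the fibres of the proper scheme $P$.
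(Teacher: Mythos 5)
The paper does not actually prove this theorem; it states it and refers the reader to Gieseker's criterion as presented in \cite[6.1.7]{LII} (and to \cite[4.7]{J07}). So the only possible comparison is with that standard reference, and your proof is essentially the same argument: the easy direction is verbatim, and for the converse you pass to $P=\P(\sE)$, use global generation of $\sym^m\sE$ to make $L=\sO_{\P(\sE)}(1)$ semiample, factor a power of $L$ through the associated proper map $\phi$ to projective space, and reduce ampleness to finiteness of $\phi$, i.e. to $L\cdot\Gamma>0$ for every irreducible curve $\Gamma\subset P$. The dichotomy you use --- curves in a fibre $\pi^{-1}(x)\iso\P^{r-1}$ handled by ampleness of $\sO(1)$ on the fibre, horizontal curves handled by pulling back to the normalization $C$, invoking quasi-ampleness of $\ga^*\sE$, and recognizing $\nu^*L$ as a quotient line bundle of $\ga^*\sE$ via the universal property of $\P(\sE)$ --- is exactly the mechanism in Gieseker's proof. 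All the intermediate claims check out: $L^{\ten m}$ really is a quotient of $\pi^*\sym^m\sE$ via the $m$-th symmetric power of the tautological surjection, $\deg\nu^*L$ and $L\cdot\Gamma$ agree by the projection formula since $\nu$ is birational onto $\Gamma$, and a proper morphism contracting no curve has finite fibres, hence is finite. Your closing remark that no reduction to the reduced or irreducible case is needed is correct, since the conclusion is purely about fibres of $\phi$. In short, the proof is correct and matches the cited source in both structure and key ideas; there is nothing to flag.
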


\begin{cor} \label{qamplebigcan} Let $X$ be a complex projective variety with at most canonical singularities.  If $\om_X$ is quasi-ample and big, then $\om_X$ is ample.
\end{cor}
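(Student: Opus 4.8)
The plan is to combine the Kawamata--Shokurov base point free theorem with the Gieseker-type criterion of Theorem \ref{qaisa}. Throughout, $\om_X$ denotes the dualizing sheaf, which we take to be invertible (i.e.\ $X$ is Gorenstein), so that $\om_X = \O_X(K_X)$ for a Cartier divisor $K_X$; if $X$ is merely canonical one runs the same argument with $K_X$ regarded as a $\Q$-Cartier divisor. Recall also that a line bundle is quasi-ample exactly when it is strictly nef, so in particular the hypothesis that $\om_X$ is quasi-ample and big says that $K_X$ is a nef and big Cartier divisor on the projective variety $X$.

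First I would record the two standard facts about canonical singularities that are needed: $X$ is normal with $K_X$ $\Q$-Cartier, and every discrepancy of $X$ is $\ge 0 > -1$; hence $(X,0)$ is a klt pair. Then I would apply the base point free theorem (see, e.g., \cite{KM}) to the nef Cartier divisor $D = K_X$: taking $a = 2$, the divisor $aD - (K_X + 0) = K_X$ is nef and big by hypothesis, so the theorem produces an integer $m_0 > 0$ with $|m K_X|$ base point free for all $m \ge m_0$. Equivalently, $\sym^m \om_X = \om_X^{\otimes m}$ is globally generated for all $m \ge m_0$.

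This is precisely condition (1) of Theorem \ref{qaisa} for the (line) bundle $\sE = \om_X$, and condition (2) is the standing hypothesis that $\om_X$ is quasi-ample. Therefore Theorem \ref{qaisa} gives that $\om_X$ is ample, as claimed. (Alternatively, one can finish directly: the morphism $\varphi$ attached to $|m K_X|$ for $m \gg 0$ contracts no curve, since $\om_X$ is strictly nef, hence $\varphi$ is finite, so $\om_X^{\otimes m} = \varphi^*\O(1)$ is the pullback of an ample bundle by a finite morphism and is therefore ample, whence $\om_X$ is ample.)

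The only nontrivial ingredient is the base point free theorem, and the single point to get right is that the hypothesis ``canonical'' is exactly what makes $(X,0)$ a klt pair with $K_X$ Cartier; the nefness required to apply the theorem is automatic from quasi-ampleness, and bigness is assumed, so the auxiliary divisor $aK_X - K_X$ is indeed nef and big. Beyond correctly invoking that theorem there is no substantial obstacle.
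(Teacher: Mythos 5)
Your argument is the same as the paper's: observe that quasi-ample implies nef, feed $2K_X - K_X = K_X$ nef and big into the Base Point Free Theorem to get $|mK_X|$ free for $m \gg 0$, and then invoke Theorem \ref{qaisa}. The remarks about Gorenstein vs.\ $\Q$-Cartier and the alternative ``finite morphism'' finish are fine but not needed; the core reasoning matches the paper exactly.
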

\begin{proof}  Let $K_X$ be a canonical divisor corresponding to $\om_X$.  Since $K_X$ is quasi-ample, it is nef.  Thus $2K_X - K_X = K_X$ is nef and big, so by the Base Point Free Theorem, \cite[3.3]{KM98}, $|bK_X|$ has no base points for $b \gg 0$.  Thus $\om_X^b$ is generated by global sections, so by (\ref{qaisa}) $\om_X$ is ample.
\end{proof}

In general, quasi-ample does not imply ample.  Mumford constructs an example of a quasi-ample vector bundle that is not ample, see \cite[Example 10.6]{H70}.  Namely, he starts with a curve of genus greater than two and a rank two vector bundle $\sE$ of degree zero such that $\sym^m(\sE) $ is stable for all $m \geq 0$.  Then  $\sO_{\P(\sE)}(1)$ is a quasi-ample line bundle that is not big, hence can not be ample.  Ramanujam extends this  example to produce a quasi-ample and big line bundle which is not ample, see \cite[Example 10.8]{H70}.

However, in certain cases quasi-ample implies ample.  For example, it is not difficult to see that for the tangent bundle of a projective variety, $T_X$ being quasi-ample implies that $T_X$ is ample, and hence $X \iso \P^n$.  In the case of the cotangent bundle, it is unknown if $\OM_X$ being quasi-ample implies that $\OM_X$ is ample.  It also is unknown whether $\OM_X$ quasi-ample with $\sO_{\P(\OM_X)}(1)$ big implies that $\OM_X$ is ample with respect to an open set (note in Example \ref{2ex} where, $X = C \times C$, $\OM_X$ is not quasi-ample).

We can weaken the condition of quasi-ample as follows

\begin{definition} \label{qamplewrtU} If $U \subseteq X$ is an open set, $\sE$ \emph{quasi-ample with respect to $U$} if for every non-constant morphism $\ga: C \to X$ from a complete nonsingular curve $C$, where $\ga(C) \cap U \neq \emptyset$, $\ga^*\sE$ is ample on $C$.
\end{definition}

 We end this section by recalling the notions of isotriviality and maximum variation.  
 
 \begin{definition} A morphism $X \to S$, where $S$ is a complete nonsingular curve, is \emph{isotrivial} if $X_s \iso X_t$, for general $s,t \in S$. 
\end{definition}
 
 Note that if $X \to S$ is smooth projective isotrivial morphism then there exists an \'etale cover $S' \to S$  such that $X \times _S S' \to S'$ is trivial.  
If $f: X \to S$ is nonisotrivial, then for general $t \in S$, the Kodaira-Spencer map at $t$, $\rho_{f,t}: T_{S,t} \to \coh 1. X_t. T_{X_t}., $ is nonzero.  
  
More generally, let $f: X \to Y$ be a surjective morphism between smooth projective varieties, then $\var (f)$ denotes the number of effective parameters of the birational equivalence classes of the fibers.  For the rigorous definition of $\var (f)$, see \cite[2.8]{K87} or \cite[pg. 329]{V83}.  If $\var(f)=0$ then $X_y \iso X_t$ for general $y, t \in Y$.  If $\var (f) = \dim Y$, we say that $f$ has maximum variation.  If $f$ is smooth and if for all $y \in Y$ the set $\{ p \in Y | X_p \iso X_y \}$ is finite, then $\var (f) = \dim Y$. Conversely, if $f$ is smooth and  $\var (f) = \dim Y$, then there exists a moduli space for the fibers of $f$, and hence there exists an open set $U \subseteq Y$ such that for all $y \in U$, the set $\{ p \in U | X_p \iso X_y \}$ is finite.  In the case where a moduli space exists for the fibers of $f$, the variation of $f$ is equal to the rank of the Kodaira-Spencer map at a general point of $Y$, so in particular, if $\var (f) = \dim Y$, then the Kodaira-Spencer map at a general point of $Y$ is injective.

\section{Positivity of Cotangent Bundles}

If $X$ is a complex smooth projective variety, we can consider the case where the cotangent bundle $\OM_X$ is ample.  In this case, $X$ is both algebraically and Kobayashi hyperbolic  \cite{D97}, so in particular, $X$ contains no rational or elliptic curves and any map from an abelian variety to $X$ is constant.  If we consider the weaker case of $\OM_X$ being only quasi-ample, $X$ still has some nice properties.

\begin{lemma}
Let $X$ be a smooth projective variety with quasi-ample cotangent bundle $\OM_X$.  Then
\begin{enumerate}
\item If $Y \subset X$ is a nonsingular subvariety, then $Y$ has a quasi-ample cotangent bundle $\OM_Y$.  
\item If $f: Y \to X$ is any morphism where $Y$ is an abelian variety or $\P^1$, then $f$ is constant.
\end{enumerate}
\end{lemma}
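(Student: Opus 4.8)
The plan is to deduce both parts from the defining property of quasi-ampleness — that $\ga^*\OM_X$ is ample on $C$ for every non-constant $\ga\colon C\to X$ from a smooth complete curve — together with two standard facts about bundles on a smooth complete curve $C$: a quotient of an ample bundle is ample and an ample bundle has positive degree, while a subbundle (hence, after saturating, a subsheaf) of a nef bundle of degree $0$ has non-positive degree.

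For (1), I would invoke the conormal exact sequence $0\to N_{Y/X}^{\vee}\to\OM_X|_Y\to\OM_Y\to 0$, which presents $\OM_Y$ as a quotient of $\OM_X|_Y$. By Theorem~\ref{qampleprop}(6) the restriction $\OM_X|_Y$ is quasi-ample, so by Theorem~\ref{qampleprop}(1) its quotient $\OM_Y$ is quasi-ample. (One can also argue directly without citing Theorem~\ref{qampleprop}: given a non-constant $\ga\colon C\to Y$, the composite $C\to Y\hookrightarrow X$ is non-constant, so $\ga^*(\OM_X|_Y)$ is ample, and hence so is its quotient $\ga^*\OM_Y$.) This step is routine.

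For (2), suppose $f\colon Y\to X$ is non-constant with $Y=\P^1$ or $Y$ an abelian variety of dimension $g$, and seek a contradiction. First choose a smooth complete curve $C$ and a non-constant morphism $\ga\colon C\to Y$ such that $f\com\ga$ is still non-constant: for $Y=\P^1$ take $C=\P^1$ and $\ga=\id$; for $Y$ abelian take $\ga$ to be the inclusion of a general complete intersection of ample divisors on $Y$, which is smooth by Bertini and on which $f$ is non-constant since such curves sweep out $Y$ while $f$ is not constant. Put $h:=f\com\ga\colon C\to X$; then $h^*\OM_X$ is ample on $C$. The codifferential of $h$ factors as $h^*\OM_X\to\ga^*\OM_Y\to\OM_C$, and since the full composite is nonzero ($h$ being non-constant), the first arrow $h^*\OM_X\to\ga^*\OM_Y$ is nonzero; let $\mathcal{Q}$ be its image. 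As a subsheaf of the locally free $\ga^*\OM_Y$ on a smooth curve, $\mathcal{Q}$ is a vector bundle of rank $\ge 1$, and as a quotient of the ample bundle $h^*\OM_X$ it is ample, so $\det\mathcal{Q}$ is ample and $\deg\mathcal{Q}>0$. On the other hand $T_Y$ is globally generated (it is $\sO_{\P^1}(2)$, resp.\ trivial), so $\ga^*T_Y$ is nef, hence every subbundle of $\ga^*\OM_Y=(\ga^*T_Y)^{\vee}$ has non-positive degree, and therefore so does every subsheaf; concretely $\ga^*\OM_Y$ is $\sO_{\P^1}(-2)$, resp.\ the trivial bundle $\sO_C^{\oplus g}$. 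Thus $\deg\mathcal{Q}\le 0$, a contradiction, so $f$ is constant.

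The one point needing genuine care is the choice of $C$ in the abelian case — one must exhibit a \emph{smooth} complete curve in $Y$ on which $f$ is not constant, which Bertini together with the sweeping-out observation provides. Everything else is bookkeeping: verifying that the image of the codifferential is an honest vector bundle (torsion-freeness on a smooth curve) and the two degree estimates cited above.
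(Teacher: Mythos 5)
Your argument is correct and follows essentially the same route as the paper: part (1) via the conormal exact sequence, and part (2) by choosing a curve $C \to Y$ whose composite with $f$ is non-constant, pulling back, and playing the ampleness of $\ga^*f^*\OM_X$ off against the triviality (resp.\ negativity) of $\ga^*\OM_Y$. You merely unify the paper's two sub-arguments (a $\Hom$-vanishing for the abelian case and a dualization for $\P^1$) into a single degree comparison on the image of the codifferential, which is a slight streamlining of the same underlying idea.
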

\begin{proof} Let $i:Y \into X$, where $X$ has a quasi-ample cotangent bundle.   If $\sI$ denotes the ideal sheaf of $Y$, then we have the short exact sequence 
\[ \sesshort \sI/\sI^2 . \OM_X|_Y . \OM_Y.. \]
Since $\OM_X$ is quasi-ample, $\OM_X|_Y$ is quasi-ample and hence so is $\OM_Y$.

Suppose $f: Y \to X$ is a non-constant morphism from an abelian variety $Y$ to a smooth projective variety $X$, with $\OM_X$ quasi-ample.  Let $C$ be a complete nonsingular curve and let $\ga: C \to Y$ be a non-constant morphism such that  $f\ga: C \to X$ is also non-constant. Then we get  the following commutative diagram
\centerline{
\xymatrix{
\ga^*f^* \OM_X  \ar[r]^{\be} \ar[d]_{\al} &  \OM_C \ar@{=}[d] \\
\ga^*\OM_Y \ar[r]^{\be'} &  \OM_C \\
}}
with $\be$ and $\be'$ nonzero.  Hence $\al: \ga^*f^* \OM_X \to \ga^*\OM_Y$ must also be nonzero.  Since $\OM_X$ is quasi-ample, $\Hom( \ga^*f^* \OM_X, \sO_C)=0$, and since $Y$ is an abelian variety, $\ga^* \OM_Y \iso \sO_C^{\oplus d},$ where $d$ is the dimension of $Y$.  Thus $\Hom( \ga^*f^* \OM_X, \ga^*\OM_Y)=0$ forcing $\al$ to be zero, which is a contradiction, so $f: Y \to X$ must be constant.  If $f: \P^1 \to X$ is non-constant, then we get a non-constant map $\si: f^*\OM_X \to \OM_{\P^1}$.  Let $\sE$ be the image of $\si$, then $\sE$ is ample since it is a quotient of $f^*\OM_X$.  But then, since $T_{\P^1}$ surjects onto $\sE^{\vee}$, $\sE^{\vee}$ is also ample, which leads to a contradiction.  Thus any $f: \P^1 \to X$ must be constant.
\end{proof}

We now begin our generalization of Theorem \ref{A}.

\subsection{Smooth Towers of Smooth Projective Varieties} \label{smoothsec}

\begin{thm}\label{genus}
Let $f:X \to Y$ be a smooth projective morphism, where $\dim X = \dim Y + 1$ and $\var(f) = \dim Y$.  Then, for all $y \in Y$, $X_y$ is a curve of genus at least 2, and if $\dim Y =1$, the genus of $Y$ is also at least 2.  In particular $\om_{X_y}$ is an ample line bundle on $X_y$ for all $y \in Y$.
\end{thm}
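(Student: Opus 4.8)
The plan is to use that smoothness of $f$ forces every fibre $X_y$ to be a smooth projective curve of one fixed genus $g$, to rule out $g=0$ and $g=1$ from the hypothesis $\var(f)=\dim Y$, and then to observe that $\om_{X_y}$ is automatically ample once $g\ge 2$, since $\deg\om_{X_y}=2g-2>0$ on a curve. Throughout I take $\dim Y\ge 1$, the case $\dim Y=0$ being outside the intended scope. As recalled above, the hypothesis $\var(f)=\dim Y$ together with smoothness of $f$ produces a dense open $U\subseteq Y$ with $\{\,p\in U:X_p\cong X_y\,\}$ finite for every $y\in U$; since $\dim U=\dim Y\ge 1$, the set $U$ is infinite.

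To exclude $g=0$: over $\C$ the only smooth projective curve of genus $0$ is $\P^1$, so every fibre over $U$ is isomorphic to $\P^1$, whence $\{\,p\in U:X_p\cong X_y\,\}=U$ is infinite --- a contradiction. To exclude $g=1$: I would pass to the relative Jacobian $\Pic^0_{X/Y}\to Y$, a smooth projective family of elliptic curves whose fibre over $y$ is isomorphic, as a curve, to $X_y$ (over $\C$ a genus-one curve is isomorphic to its Jacobian). Its $j$-invariant is a morphism $j:Y\to\bA^1$ --- regular because smoothness of the family makes the discriminant nowhere zero --- and since $Y$ is complete, $j$ is constant; hence all the $X_y$ are mutually isomorphic and again $\{\,p\in U:X_p\cong X_y\,\}=U$ is infinite, a contradiction. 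Therefore $g\ge 2$, and $\om_{X_y}$, being a line bundle of degree $2g-2>0$ on a smooth projective curve, is ample.

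It remains to treat the case $\dim Y=1$ and show $g(Y)\ge 2$. Now $\var(f)=1=\dim Y$, so $f$ is non-isotrivial. Arguing by contradiction, suppose $g(Y)\le 1$. I would invoke the classical rigidity theorem of Parshin and Arakelov: there is no smooth non-isotrivial family of curves of genus $\ge 2$ over a smooth complete curve $Y$ with $2g(Y)-2\le 0$. For a more self-contained argument one can combine the Fujita--Koll\'ar semipositivity of $f_*\om_{X/Y}$ --- in particular $\deg f_*\om_{X/Y}\ge 0$, with equality forcing $f$ isotrivial --- with the Arakelov inequality $\deg f_*\om_{X/Y}\le\tfrac{g}{2}\bigl(2g(Y)-2\bigr)$ for smooth families: if $g(Y)\le 1$ the right-hand side is $\le 0$, so $\deg f_*\om_{X/Y}=0$, forcing $f$ isotrivial, a contradiction. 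Hence $g(Y)\ge 2$.

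The main obstacle is this last step: the statement about $\deg f_*\om_{X/Y}$ (non-negativity together with the isotriviality criterion, and the Arakelov upper bound), equivalently the Parshin--Arakelov rigidity theorem, is the one substantial external input; excluding genus $0$ and $1$ fibres and deducing ampleness of $\om_{X_y}$ are routine. In writing this up I would pin down the exact form of the Arakelov/Parshin statement being cited and double-check the relative-Jacobian reduction in the genus-one case --- in particular that replacing $X/Y$ by $\Pic^0_{X/Y}$ preserves the isomorphism classes of the fibres.
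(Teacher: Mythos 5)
Your argument is correct and follows essentially the same strategy as the paper: rule out genus $0$ and genus $1$ fibres via isotriviality (your constant $j$-invariant of the relative Jacobian is the same mechanism as the paper's appeal to Beauville's $J$-fibration), conclude $\om_{X_y}$ is ample since $\deg\om_{X_y}=2g-2>0$, and invoke a Parshin--Arakelov type rigidity result for $g(Y)\ge 2$ (the paper cites Beauville [B84, III.15.4] for this). The only structural difference is that you run the genus-exclusion argument uniformly in $\dim Y$ over the dense open $U$ with finitely many fibres per isomorphism class, whereas the paper first treats $\dim Y=1$ and then reduces the higher-dimensional case by restricting $f$ to a general curve $S\subset Y$ --- a reduction that tacitly uses $\var(f|_{f^{-1}(S)})=1$ for general $S$, which your version sidesteps.
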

\begin{proof}
Suppose first $\dim Y =1$, so $\dim X =2$.  Then, by \cite[III.15.4]{B84}, $g(Y) \geq 2$.  Since $f$ is flat, the genus of the fibers is constant.  If the genus is zero, then all the fibers are isomorphic to $\P^1$, hence $f$ is isotrivial.  By the existence of the $J$-fibration, the genus of the fibers can not be one \cite[Ch V, Sections 9 and 14]{B84}.  Thus the genus of the fibers is at least 2.  
If $\dim Y =n$, for $n>1$, let $S$ be a general curve in $Y$.  Restricting $f$ to $f^{-1}(S)$, we are in the previous case, so $X_y$ must have genus at least 2.\end{proof}

Given a surjective map of smooth projective varieties, $f: X \to Y$, of relative dimension $k$, we will use the positivity of $f_*\om_{X/Y}^m$ and, more generally, $f_*\OM_{X/Y}^k(\log \DE)^m$, where $\DE$ is a normal crossing divisor on $X$.  These deep results are found in the work of Viehweg and Koll\'ar (cf.  \cite{V83II}, \cite{K87}).  In particular, we will use the following formulation

\begin{thm} \label{pushforward}
Let $f: X \to Y$ be a surjective map of smooth projective varieties of relative dimension $k$, with $\var(f) = \dim Y$.
\begin{enumerate}
\item \textup{\cite[3.4]{VZ} } If $f: X \to Y$ is smooth and $\om_{X/Y}$ is $f$-ample, then $f_*\om_{X/Y}^m$ is ample with respect to an open dense $V \subset Y$ for all $m>1$, where $f_*\om_{X/Y}^m \neq 0$.  Furthermore, we can take $V$ to be the open set where the moduli map $\eta: V \to M_h$ is quasi-finite over it's image.
\item \textup{\cite[3.6]{VZ}} Let $S \subset Y$ be a reduced normal crossing divisor containing the discriminant locus and let $\DE:=f^* S$ be a normal crossing divisor.  Let $V:=Y-S$ and $U:=X-\DE$, so we have a smooth family $U \to V$.  Suppose that $\om_{U/V}$ is $f-$semi-ample, the smooth fibers of $f$ are canonically polarized and that the moduli map $\eta: V \to M_h$ is quasi-finite over it's image.  Then for $m$ sufficiently large and divisible, $f_* \OM_{X/Y}^k(\log \DE)^m$ is ample with respect to $V$.
\end{enumerate}
\end{thm}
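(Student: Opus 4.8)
The plan is to derive both parts of Theorem~\ref{pushforward} from Viehweg's machinery of weak positivity of direct image sheaves, together with a bigness criterion for determinants; the hypothesis $\var(f)=\dim Y$ is precisely the ingredient that upgrades ``weakly positive'' to ``ample with respect to an open set'' in the sense of Definition~\ref{big2}(2). I would treat part~(1) in detail and then indicate the modifications needed for the logarithmic statement~(2). \textbf{Step 1 (weak positivity).} For any surjective morphism $f\colon X\to Y$ of smooth projective varieties and any $m\ge 1$ with $f_*\om_{X/Y}^m\neq 0$, the sheaf $f_*\om_{X/Y}^m$ is weakly positive over the open locus where $f$ is smooth. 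This is Viehweg's weak positivity theorem, which I would cite as a black box; its proof rests on the semipositivity of Hodge bundles (Fujita--Kawamata--Koll\'ar), applied after a cyclic covering / semistable reduction to handle the case $m=1$, combined with Viehweg's fibre-product trick, which passes to the $r$-fold fibre power $X^r=X\times_Y\cdots\times_Y X$ and compares $(f^r)_*\om_{X^r/Y}$ with $\bigl(f_*\om_{X/Y}\bigr)^{\otimes r}$ up to a weakly positive error in order to bootstrap from $m=1$ to arbitrary $m$. Under the hypotheses of (1), namely $f$ smooth and $\om_{X/Y}$ $f$-ample, the fibres are canonically polarized, a coarse moduli space $M_h$ exists, and on a dense open $V\subset Y$ the moduli map $\eta\colon V\to M_h$ is quasi-finite onto its image; this $V$ is the open set named in the statement.

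\textbf{Step 2 (bigness of the determinant).} This is where $\var(f)=\dim Y$ is used. Since $\eta$ is generically finite, one shows that $\det f_*\om_{X/Y}^m$ is a big line bundle on $Y$ for $m$ sufficiently large and divisible. The cleanest route is Koll\'ar's ampleness theorem on the moduli space: a power of $\det f_*\om_{X/Y}^m$ descends, up to an effective correction, from an ample line bundle on a compactification of $M_h$, so its pullback by $\eta$ is big on $V$; equivalently one runs the Kodaira--Spencer / Griffiths-curvature argument showing that the Weil--Petersson form is a K\"ahler current whenever the period map is generically immersive (and $\var(f)=\dim Y$ forces the Kodaira--Spencer map, hence the period map, to be injective at a general point, as recalled at the end of the Introduction).

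\textbf{Step 3 (combining), and part (2).} A lemma of Viehweg (see \cite{V83}) asserts that a locally free sheaf $\sE$ that is weakly positive over $V$ and has big determinant is automatically ample with respect to $V$ in the sense of Definition~\ref{big2}(2); applying this to $\sE=f_*\om_{X/Y}^m$ over the quasi-finiteness locus $V$ of $\eta$ yields statement~(1). For part~(2) one repeats the three steps in the logarithmic category: after replacing $X$ by a blow-up if necessary so that $\DE=f^*S$ becomes a normal crossing divisor, one works with $f_*\OM_{X/Y}^k(\log\DE)^m$. The $f$-semi-ampleness of $\om_{U/V}$ and the canonical polarization of the smooth fibres are exactly the hypotheses that let the fibre-product trick, the existence of $M_h$, and Koll\'ar's ampleness go through in the log setting, giving both the logarithmic weak positivity of $f_*\OM_{X/Y}^k(\log\DE)^m$ over $V=Y-S$ and the bigness of its determinant; Viehweg's lemma then produces ampleness of $f_*\OM_{X/Y}^k(\log\DE)^m$ with respect to $V$.

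The main obstacle is Step~2: converting the purely numerical hypothesis $\var(f)=\dim Y$ into genuine positivity of $\det f_*\om_{X/Y}^m$ (resp.\ its logarithmic analogue). This requires either Koll\'ar's theorem on the ampleness of determinant sheaves over moduli of canonically polarized manifolds, or the equivalent Hodge-theoretic statement that a generically immersive period map yields a K\"ahler current; it is the deep point and the reason the result is attributed to \cite{VZ} and \cite{K87}. Step~1 is also technically substantial — covering constructions, the fibre-product trick, and the semipositivity of Hodge bundles — but by now it is a standard citable package, and Step~3 is a formal consequence of the definitions.
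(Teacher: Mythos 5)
The paper does not actually prove Theorem~\ref{pushforward}; it is stated purely as a citation to Viehweg--Zuo (\cite[3.4]{VZ} and \cite[3.6]{VZ}), so there is no ``paper's proof'' to compare against. Your sketch is a reasonable high-level outline of the machinery behind the Viehweg--Zuo theorem (weak positivity of direct images, the role of maximal variation and Koll\'ar's ampleness of determinant sheaves on moduli, and Viehweg's fibre-product trick), and correctly identifies where the hypothesis $\var(f)=\dim Y$ enters.

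However, Step~3 as you have stated it is wrong, and the counterexample is already in this paper. You assert a ``lemma of Viehweg'' to the effect that a locally free sheaf which is weakly positive over $V$ and has big determinant is automatically ample with respect to $V$. Take $\sE=\sO_{\P^1}\oplus\sO_{\P^1}(1)$ on $\P^1$ and $V=\P^1$: this $\sE$ is globally generated, hence weakly positive over all of $\P^1$, and $\det\sE\iso\sO_{\P^1}(1)$ is ample (in particular big); but Example~\ref{Ex2} in the paper shows $\sE$ is \emph{not} V-big, since it has the quotient $\sO_{\P^1}$ which is not ample with respect to any open set. So ``weakly positive over $V$ plus big determinant'' does not imply ``ample with respect to $V$.'' The correct form of Viehweg's upgrading lemma carries an additional hypothesis --- roughly that for some $\mu>0$ the sheaf $S^\mu(\sE)\otimes\det(\sE)^{-1}$ is also weakly positive over $V$ --- and this extra condition is where the fibre-product trick and the moduli-theoretic structure (Koll\'ar's ampleness on $\overline{M}_h$, spreading positivity from $\det$ to the full direct image via the self-fibre-products $X\times_Y\cdots\times_Y X$) really do work that cannot be replaced by the bare bigness of the determinant. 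As written, your Step~3 is a formal shortcut that does not close; you would need to either invoke the precise statement of Viehweg's lemma with its full hypotheses and verify them, or cite \cite{VZ} as a black box as the paper itself does.
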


\begin{thm}\label{amplerel} 
Let $f:X \to Y$ be a surjective nonisotrivial morphism of smooth projective varieties, with $\dim Y =1$ and $\om_{X_y}$ an ample line bundle for all $y\in Y$. Then $\om_{X/Y}$ is an ample line bundle.
\end{thm}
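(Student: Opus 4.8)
The plan is to reduce ampleness of $\om_{X/Y}$ on $X$ to ampleness of a vector bundle on the curve $Y$, which can then be extracted from the Viehweg--Zuo positivity result, Theorem~\ref{pushforward}. First I would observe that $\om_{X/Y}$ is $f$-ample: since $Y$ is a smooth curve, $f$ is flat, and as each $\om_{X_y}$ is a line bundle the fibres are Gorenstein, so compatibility of the relative dualizing sheaf with base change gives $\om_{X/Y}|_{X_y}\cong\om_{X_y}$, which is ample for every $y\in Y$. Consequently, for $r\gg0$ the sheaf $E:=f_{*}\om_{X/Y}^{r}$ is locally free on $Y$, the evaluation map $f^{*}E\onto\om_{X/Y}^{r}$ is surjective, and the induced $Y$-morphism $\iota\colon X\into\P(E)$ is a closed immersion with $\iota^{*}\sO_{\P(E)}(1)\cong\om_{X/Y}^{r}$.

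Next I would apply Theorem~\ref{pushforward}(1) to the family $f$. Because $\dim Y=1$ and $f$ is nonisotrivial, $\var(f)=1=\dim Y$ (if $\var(f)=0$ the general fibres would be isomorphic, i.e.\ $f$ would be isotrivial, so $\var(f)\ge 1$, while always $\var(f)\le\dim Y=1$); together with the $f$-ampleness of $\om_{X/Y}$ just established, Theorem~\ref{pushforward}(1) gives that $E=f_{*}\om_{X/Y}^{r}$ is ample with respect to a dense open $V\subseteq Y$. The key point is that on a \emph{curve} this forces $E$ to be ample outright. Indeed, by definition there are an ample line bundle $\sH$ on $Y$ and an integer $c>0$ such that $\sym^{c}E\ten\sH^{-1}$ is weakly positive over $V$. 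If $\sym^{c}E\ten\sH^{-1}$ were not nef, it would have a semistable quotient bundle $Q$ of negative slope; then $Q$ is again weakly positive over $V$, so for suitable $a,b>0$ the bundle $\sym^{ab}Q\ten\sH^{b}$ is globally generated over $V$. But $\sym^{ab}Q\ten\sH^{b}$ is semistable of slope $b\bigl(a\mu(Q)+\deg\sH\bigr)$, which is negative once $a$ is large, and a semistable bundle of negative slope on a curve has no nonzero global sections --- contradicting global generation over the nonempty set $V$. Hence $\sym^{c}E\ten\sH^{-1}$ is nef, so $\sym^{c}E$ is ample, and therefore $E$ is ample on $Y$.

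Finally, since $E$ is an ample vector bundle on $Y$, the tautological bundle $\sO_{\P(E)}(1)$ is ample on $\P(E)$, so its restriction to the closed subvariety $\iota(X)$ is ample; that restriction is $\om_{X/Y}^{r}$, hence $\om_{X/Y}$ is ample, as claimed. I expect the main obstacle to be the middle paragraph --- the correct invocation of Theorem~\ref{pushforward}(1) together with the upgrade from ``ample with respect to $V$'' to genuine ampleness on the curve $Y$ --- since the first and last steps are essentially formal. One technical wrinkle: Theorem~\ref{pushforward}(1) is stated for $f$ smooth; should $f$ be only generically smooth, one runs the analogous argument over the smooth locus of $f$ using Theorem~\ref{pushforward}(2) in place of (1), obtaining that $\om_{X/Y}$ is ample with respect to $f^{-1}(V)$, and then concludes with the Nakai--Moishezon criterion, using the weak positivity of $f_{*}\om_{X/Y}^{r}$ over $Y$ for nefness and the identity $\om_{X/Y}|_{X_y}\cong\om_{X_y}$ on the finitely many remaining fibres.
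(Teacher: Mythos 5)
Your argument is correct and follows essentially the paper's route: reduce the statement to ampleness of $E=f_*\om_{X/Y}^r$ on the curve $Y$ and then pull back $\sO_{\P(E)}(1)$ along the relative embedding $X\into\P(E)$. The one step you flesh out beyond the paper's bare citation of Viehweg and Koll\'ar --- upgrading ``ample with respect to a dense open'' from Theorem~\ref{pushforward}(1) to genuine ampleness of $E$ on a curve via a Harder--Narasimhan semistability argument --- is correct and a fine self-contained substitute.
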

\begin{proof}  The proof follows exactly as in \cite[2.5]{K97}, once we know that $f_*\om_{X/Y}^m$ is ample for some $m>0$, \cite{V83II}, \cite{K87}. \end{proof}

The following is due to Gieseker

\begin{lemma}\textup{\cite[Proposition 2.2]{G71} }\label{gc}
Let $C$ be a nonsingular curve and suppose $\sF$ is ample on $C$ and we have a non-trivial extension \[ \sesshort \sO_C.\sE.\sF.. \] 
Then $\sE$ is ample.
\end{lemma}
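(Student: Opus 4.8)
The plan is to verify ampleness of $\sE$ through the numerical characterization of ample bundles on a nonsingular projective curve (\cite{H66}): it suffices to show that every quotient bundle $\sQ$ of $\sE$ has $\deg \sQ > 0$. So I would fix a surjection $\sE \onto \sQ$, set $\sK := \ker(\sE \to \sQ)$ (a bundle, since $C$ is a smooth curve), and analyze the composite $j \colon \sO_C \to \sE \to \sQ$, splitting into the cases $j = 0$ and $j \neq 0$. Two standing facts will be used freely: a nonzero quotient \emph{sheaf} $\sT$ of an ample bundle on $C$ has $\deg \sT > 0$ — its torsion part has positive length, while $\sT/\mathrm{tors}$, when nonzero, is a locally free, hence ample, hence positive-degree quotient — and $\deg$ is additive on short exact sequences of coherent sheaves.

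If $j = 0$, then $\sO_C \subseteq \sK$, so $\sQ = \sE/\sK$ is a quotient of $\sE/\sO_C \iso \sF$, and ampleness of $\sF$ gives $\deg \sQ > 0$. Now suppose $j \neq 0$. Since $\sO_C$ is a line bundle on the integral curve $C$, a nonzero $j$ is injective, i.e. $\sO_C \cap \sK = 0$ inside $\sE$; hence $\sK$ injects into $\sE/\sO_C \iso \sF$, with some image $\sF' \subseteq \sF$. A short diagram chase — both $\sF/\sF'$ and $\sQ/\sO_C=\coker(j)$ are canonically $\sE/(\sO_C \oplus \sK)$, the two subsheaves meeting trivially — identifies $\sQ/\sO_C \iso \sF/\sF'$. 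If $\sF/\sF' = 0$, then $\sK \to \sF$ is an isomorphism, so $\sQ \iso \sE/\sK \iso \sO_C$, and the surjection $\sE \to \sQ$ restricts on the sub-bundle $\sO_C \subseteq \sE$ to the nonzero — hence invertible — endomorphism $j$; this produces a retraction and splits $0 \to \sO_C \to \sE \to \sF \to 0$, contradicting non-triviality of the extension. Therefore $\sF/\sF' \neq 0$ is a nonzero quotient sheaf of the ample bundle $\sF$, so $\deg(\sF/\sF') > 0$, and from $0 \to \sO_C \to \sQ \to \sF/\sF' \to 0$ we get $\deg \sQ = \deg(\sF/\sF') > 0$.

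Combining the cases, every quotient bundle of $\sE$ has positive degree, hence $\sE$ is ample. The only step with real content is the case $j \neq 0$: the identification $\sQ/\sO_C \iso \sF/\sF'$, together with the observation that $\sF/\sF' = 0$ would force the sequence to split, is exactly where the hypothesis that the extension is non-trivial is consumed — everything else is bookkeeping with ranks and degrees in exact sequences. (This is essentially Gieseker's original argument.)
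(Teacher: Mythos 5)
The paper gives no proof here; Lemma~\ref{gc} is simply cited from Gieseker's \emph{p-ample bundles and their Chern classes}. So there is no ``paper's own proof'' to match, and one evaluates your argument on its own terms. The mathematical core is sound: the identification of $\coker(j)$ with $\sF/\sF'$ via $\sE/(\sO_C\oplus\sK)$ is correct, the dichotomy $\sF/\sF'=0$ versus $\sF/\sF'\neq 0$ is exactly where the non-triviality hypothesis is spent (in the first branch the composite $\sF \iso \sK \hookrightarrow \sE$ would already split the sequence, as you note in the equivalent form of a retraction), and the degree bookkeeping for the torsion and locally free parts of $\sF/\sF'$ is fine.

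The one point to be careful about is the entry move: the reduction ``it suffices to show every quotient bundle of $\sE$ has positive degree'' is not a consequence of [H66]; it is the numerical characterization of ampleness on a smooth curve from Hartshorne's later paper \emph{Ample vector bundles on curves} (Nagoya Math.\ J.\ 43, 1971), and that characterization is a characteristic-zero theorem. For a general ground field (in particular, the setting Gieseker was actually working in), $\mu_{\min}(\sE)>0$ does not by itself give ampleness, and moreover your argument does not pass to finite covers: for $\phi:C'\to C$ the pulled-back extension class in $H^1(C',\phi^*\sF^\vee)$ may vanish even when the original one does not, so one cannot simply rerun the dichotomy on $\phi^*\sE$. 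Hence your proof is a clean and correct argument over $\C$ (which is all the paper uses), but the parenthetical claim that it is ``essentially Gieseker's original argument'' is doubtful: Gieseker's proposition is stated in a context where the numerical criterion you invoke is unavailable, so his proof must be genuinely different. I would delete that remark, cite the 1971 Hartshorne paper (or \cite[6.4.15]{LII}) for the positivity-of-quotients criterion rather than [H66], and record explicitly that the argument is carried out over $\C$.
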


We first consider Schneider's original set-up, that is a nonisotrivial smooth projective morphism from a surface to a curve.

\begin{lemma}\label{fiber} Let $X$ and $Y$ be smooth projective varieties, with $\dim Y =1$, and let $f:X \to Y$ be a smooth projective nonisotrivial morphism.   Let $B := \{ p \in Y | \al : \coh 0. X_p. f^*T_Y|_{X_p}. \to \coh 1.X_p. T_{X_p}. $ is not injective$\}$. 
Then for any $y \in Y - B$, the short exact sequence,
\begin{equation}\label{1}
\sesshort f^*\OM_Y|_{X_y}.\OM_X|_{X_y}.\OM_{X/Y}|_{X_y}. \end{equation}
does not split.
\end{lemma}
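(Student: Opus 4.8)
The plan is to pass to the dual of (\ref{1}) and identify the non-splitting of that sequence with the non-vanishing of a connecting homomorphism, which will turn out to be exactly the map $\al$.

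\textbf{Step 1: reduction to the tangent sequence.} The sequence (\ref{1}) is the restriction to the fiber $X_y$ of the relative cotangent sequence
\[ 0 \rtarr f^*\OM_Y \rtarr \OM_X \rtarr \OM_{X/Y} \rtarr 0 . \]
Since $f$ is smooth this is a short exact sequence of locally free sheaves on $X$, so it stays exact after restriction to $X_y$, and $\OM_{X/Y}|_{X_y} \iso \OM_{X_y}$ while $f^*\OM_Y|_{X_y} \iso \sO_{X_y} \ten_\C \OM_{Y,y}$ is a trivial line bundle, where $\OM_{Y,y}$ denotes the one-dimensional cotangent space of $Y$ at $y$. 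A short exact sequence of locally free sheaves splits if and only if its dual does; hence (\ref{1}) splits if and only if the dual sequence
\[ 0 \rtarr T_{X_y} \rtarr T_X|_{X_y} \rtarr f^*T_Y|_{X_y} \rtarr 0 , \]
which is the restriction to $X_y$ of the tangent sequence $0 \to T_{X/Y} \to T_X \to f^*T_Y \to 0$ of $f$, splits.

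\textbf{Step 2: the connecting homomorphism is $\al$, and it detects splitting.} I would next examine the connecting map $\de \colon H^0(X_y, f^*T_Y|_{X_y}) \to H^1(X_y, T_{X_y})$ of the dual sequence. Under the identification $H^0(X_y, f^*T_Y|_{X_y}) = T_{Y,y}$ (valid because $X_y$ is connected), $\de$ is by definition the Kodaira-Spencer map of $f$ at $y$, i.e.\ it is precisely the map $\al$ in the statement. The extension class of the dual sequence lies in $\Ext^1_{X_y}(f^*T_Y|_{X_y}, T_{X_y})$, and since $f^*T_Y|_{X_y} \iso \sO_{X_y} \ten_\C T_{Y,y}$ is a trivial line bundle, this group is canonically $H^1(X_y, T_{X_y}) \ten_\C \OM_{Y,y}$. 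Writing the extension class as $\xi \ten \om$ with $\xi \in H^1(X_y, T_{X_y})$ and $\om$ a basis of the line $\OM_{Y,y}$, the description of $\de$ as Yoneda product with the extension class gives $\de(s) = \langle \om, s \rangle \, \xi$ for $s \in T_{Y,y}$. Hence $\de \equiv 0$ if and only if $\xi = 0$, i.e.\ if and only if the dual sequence splits. (This equivalence is exactly where it matters that $f^*T_Y|_{X_y}$ has rank one — for a higher-rank quotient, vanishing of the connecting map is strictly weaker than splitting.)

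\textbf{Step 3: conclusion.} Combining Steps 1 and 2, the sequence (\ref{1}) splits if and only if $\al = \de$ is identically zero. Since $\dim Y = 1$, the source $H^0(X_y, f^*T_Y|_{X_y}) = T_{Y,y}$ is one-dimensional, so $\al$ fails to be injective precisely when it is the zero map. For $y \in Y - B$ the map $\al$ is injective, hence nonzero, hence the dual sequence does not split, hence neither does (\ref{1}). I expect the only point requiring real care to be the argument of Step 2: correctly identifying $\Ext^1_{X_y}(f^*T_Y|_{X_y}, T_{X_y})$ and confirming that, in this rank-one situation, non-splitting is genuinely equivalent to non-vanishing of the connecting homomorphism; the remaining reductions are formal.
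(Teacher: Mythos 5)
Your proposal is correct and takes essentially the same route as the paper: both pass to the dual tangent sequence, recognize $\al$ as the connecting homomorphism, and deduce non-splitting from its injectivity on the nonzero space $\coh 0.X_y.f^*T_Y|_{X_y}. \iso \C$. The only difference is that your Step~2 proves the full ``iff'' (vanishing of $\de$ equivalent to splitting, via the Ext-group identification), which is more than needed — the paper simply observes that splitting would make $\coh 0.X_y.T_X|_{X_y}. \to \coh 0.X_y.f^*T_Y|_{X_y}.$ surjective, hence $\al = 0$, contradicting injectivity together with $\coh 0.X_y.f^*T_Y|_{X_y}. \neq 0$.
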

\begin{proof}
Let $y \in Y-B$, then  $\al : \coh 0. X_p. f^*T_Y|_{X_p}. \to \coh 1.X_p. T_{X_p}. $ is injective.  To show that \eqref{1} does not split, it suffices to show that
\begin{equation}\label{2} 
\sesshort T_{X_y}. T_X|_{X_y}. f^*T_Y|_{X_y}.
\end{equation}
does not split.  Taking cohomology, we get
\[ 
 \cdots \to \coh 0.X_y.T_X|_{X_y}. \to \coh 0.X_y.f^*T_Y|_{X_y}. \to \coh 1.X_y.T_{X_y}. \to  \coh 1.X_y.T_X|_{X_y}. \to \cdots  \]
If \eqref{2} splits, then $\coh 0.X_y.T_X|_{X_y}. \to  \coh 0.X_y.f^*T_Y|_{X_y}.$ is surjective, and so the image of $\al: \coh 0.X_y.f^*T_Y|_{X_y}. \to \coh 1.X_y.T_{X_y}.$ is zero.  Thus $\im(\al) = \ker(\al) =0$, so $ \coh 0.X_y.f^*T_Y|_{X_y}. =0$, a contradiction.  Therefore \eqref{1} must not split. 
\end{proof}

\begin{cor} \label{imagefiber}
 Let $X$ and $Y$ be smooth projective varieties over $\C$ of dimensions $2$ and $1$, respectively,  and let $f:X \to Y$ be a smooth projective nonisotrivial morphism.  Suppose $\ga: C \to X$ is a non-constant morphism from a complete nonsingular curve, with $\ga(C)$ contained in a fiber of $f$, say $X_y$.  Moreover, suppose $\al : \coh 0. X_y. f^*T_Y|_{X_y}. \to \coh 1.X_y. T_{X_y}. $ is  injective.  Then $\ga^* \OM_X$ is an ample vector bundle on $C$.
\end{cor}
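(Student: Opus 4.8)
The plan is to reduce the statement to a claim about the fiber $X_y$ alone, apply Gieseker's Lemma \ref{gc} there, and then transport ampleness to $C$ using that $\ga$ is finite onto its image. First I would factor $\ga$ through the inclusion $\io : X_y \into X$, writing $\ga = \io \circ \ga'$ with $\ga' : C \to X_y$; since $\ga$ is non-constant and $\io$ is a closed immersion, $\ga'$ is a non-constant morphism between smooth projective curves, hence finite and flat. Next I would restrict the exact sequence \eqref{1} of Lemma \ref{fiber} to $X_y$ and identify its outer terms: $f^*\OM_Y|_{X_y} \iso \sO_{X_y}$, being the pullback of a one-dimensional vector space along the constant morphism $X_y \to \{y\}$, and $\OM_{X/Y}|_{X_y} \iso \OM_{X_y} = \om_{X_y}$, since $f$ is smooth and $X_y$ is a curve. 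This gives a short exact sequence
\[ \sesshort \sO_{X_y}.\OM_X|_{X_y}.\om_{X_y}.. \]
and, since $\al$ is injective we have $y \in Y - B$, so by Lemma \ref{fiber} this sequence does not split.

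Since $f$ is a smooth projective nonisotrivial morphism from a surface to a curve, Theorem \ref{genus} gives that $X_y$ has genus at least $2$; hence $\om_{X_y}$ is an ample line bundle on $X_y$. Applying Gieseker's Lemma \ref{gc} to the non-split extension above --- with the curve there taken to be $X_y$, $\sF = \om_{X_y}$, and $\sE = \OM_X|_{X_y}$ --- I conclude that $\OM_X|_{X_y}$ is an ample vector bundle on $X_y$. Finally $\ga^*\OM_X = (\ga')^*\bigl(\OM_X|_{X_y}\bigr)$ is the pullback of an ample vector bundle under the finite morphism $\ga'$, hence ample on $C$, which is the assertion.

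The argument is just a matter of assembling Lemma \ref{fiber}, Theorem \ref{genus}, and Lemma \ref{gc}; the one point that needs care --- and the only place where the injectivity of $\al$ enters --- is the non-triviality of the restricted extension, which is exactly what Lemma \ref{fiber} supplies. I would stress that one never needs to check that the pulled-back sequence on $C$ stays non-split: ampleness of $\OM_X|_{X_y}$ already holds on $X_y$, and finiteness of $\ga'$ then yields ampleness on $C$ for free.
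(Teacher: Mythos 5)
Your proof is correct and follows essentially the same route as the paper: reduce to $X_y$ via the finite map $\ga' : C \to X_y$, invoke Lemma \ref{fiber} for non-splitting, identify the outer terms of the restricted sequence as $\sO_{X_y}$ and the ample $\om_{X_y}$ (via Theorem \ref{genus}), and conclude by Gieseker's Lemma \ref{gc}. The extra remarks you add (the explicit factorization of $\ga$, and the observation that one need not re-check non-splitting after pullback to $C$) are harmless clarifications of the same argument.
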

\begin{proof}
 Suppose $\ga(C) \subseteq X_y$, for some $y \in Y$. Since $\ga:C \to X_y$ is finite and the pull-back of an ample line bundle by a finite map is ample, it suffices to show that $\OM_X|_{X_y}$ is ample.  By (\ref{fiber})
 \[ \sesshort f^*\OM_Y|_{X_y}.\OM_X|_{X_y}.\OM_{X/Y}|_{X_y}. \]
 does not split.  Since $f^* \OM_Y|_{X_y} \iso \sO_{X_y}$ and $\OM_{X/Y}|_{X_y} \iso \om_{X_y}$ is ample by (\ref{genus}), $\OM_X|_{X_y}$ is ample by (\ref{gc}).
 \end{proof}

\begin{thm}\label{above} Let  $f:X \to Y$ be a smooth, projective, nonisotrivial morphism, with $X$ and $Y$ projective varieties over $\C$, of dimensions $2$ and $1$, respectively.  Let $B:= \{ p \in Y | \al : \coh 0. X_p. f^*T_Y|_{X_p}. \to \coh 1.X_p. T_{X_p}. $ is not injective$\}$. Then, $\OM_X$ is quasi-ample with respect to $U := f^{-1}(Y - B)$ and for all $\la \in \LA (2,2)$, the Schur polynomial is positive, that is $\int_X s_\la (\OM_X) > 0$.  In particular, $\sO_{\P(\OM_X)}(1)$ is big.
\end{thm}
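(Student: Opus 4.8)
The plan is to treat the three assertions in turn, using throughout the relative cotangent sequence
\[ \sesshort f^*\OM_Y.\OM_X.\OM_{X/Y}., \]
which is exact with locally free terms since $f$ is smooth; as $f$ has relative dimension $1$, the quotient $\OM_{X/Y} = \om_{X/Y}$ is a line bundle. First I would record a fact used twice below: $\OM_X$ is nef. By Theorem \ref{genus}, $g(Y) \geq 2$ and $\om_{X_y}$ is ample for every $y$, so $\om_Y$ is ample and $f^*\om_Y$ is nef, while $\om_{X/Y}$ is an ample line bundle by Theorem \ref{amplerel}. Hence $\OM_X$ is an extension of a nef bundle by a nef bundle, so it is nef; equivalently $\sO_{\P(\OM_X)}(1)$ is a nef line bundle on $\P(\OM_X)$.

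For quasi-ampleness with respect to $U = f^{-1}(Y-B)$, let $\ga: C \to X$ be a non-constant morphism from a complete nonsingular curve with $\ga(C) \cap U \neq \emptyset$; note that $\ga$ is finite. If $\ga(C)$ is contained in a fiber $X_y$, then $\ga(C) \cap U \neq \emptyset$ forces $y \in Y - B$, i.e. the map $\al$ is injective at $y$, and Corollary \ref{imagefiber} applies directly to give that $\ga^*\OM_X$ is ample. If $\ga(C)$ is not contained in a fiber, then $f\ga: C \to Y$ is non-constant, hence finite; pulling back the cotangent sequence along $\ga$ yields an extension of the line bundle $\ga^*\om_{X/Y}$ by the line bundle $(f\ga)^*\om_Y$, both of which are ample on $C$ (the first because $\om_{X/Y}$ is ample and $\ga$ is finite, the second because $\om_Y$ is ample and $f\ga$ is finite). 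An extension of an ample bundle by an ample bundle is ample, so $\ga^*\OM_X$ is ample. This exhausts the possibilities, so $\OM_X$ is quasi-ample with respect to $U$.

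Next, $\LA(2,2) = \{(2,0),(1,1)\}$, and one checks from the definition that $s_{(2,0)} = c_2$ and $s_{(1,1)} = c_1^2 - c_2$. From the cotangent sequence, $c(\OM_X) = (1 + f^*K_Y)(1 + K_{X/Y})$, so $c_1(\OM_X) = f^*K_Y + K_{X/Y}$ and $c_2(\OM_X) = f^*K_Y \cdot K_{X/Y}$. Using $(f^*K_Y)^2 = 0$, the fact that $f^*K_Y$ is numerically $(2g(Y)-2)$ times a fiber $X_y$, and $X_y \cdot K_{X/Y} = \deg \om_{X_y} = 2g(X_y) - 2$, a short computation gives
\[
\int_X c_2(\OM_X) = (2g(Y)-2)(2g(X_y)-2), \qquad \int_X \bigl(c_1^2 - c_2\bigr)(\OM_X) = (2g(Y)-2)(2g(X_y)-2) + K_{X/Y}^2 .
\]
Both are strictly positive: the factor–product is positive since $g(Y), g(X_y) \geq 2$ by Theorem \ref{genus}, and $K_{X/Y}^2 > 0$ because $\om_{X/Y}$ is an ample line bundle on the surface $X$. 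Hence $\int_X s_\la(\OM_X) > 0$ for both $\la \in \LA(2,2)$.

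Finally, for bigness of $\sO_{\P(\OM_X)}(1)$, set $Z := \P(\OM_X)$ (of dimension $3$) and $\xi := c_1(\sO_Z(1))$. The Grothendieck relation $\xi^2 = \pi^*c_1(\OM_X)\,\xi - \pi^*c_2(\OM_X)$, together with $\pi_*\xi = 1$, $\pi_*1 = 0$, and the vanishing of any degree-$3$ class on the surface $X$, gives $\int_Z \xi^3 = \int_X \bigl(c_1^2 - c_2\bigr)(\OM_X) = \int_X s_{(1,1)}(\OM_X) > 0$. Since $\sO_Z(1)$ is nef (by the first paragraph) and has positive top self-intersection, it is big, which is the last assertion. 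I expect no conceptual obstacle here — the substantial inputs (the non-splitting of the cotangent sequence over $Y - B$ via Lemma \ref{fiber} and Corollary \ref{imagefiber}, the ampleness of $\om_{X/Y}$ via Theorem \ref{amplerel}, and Gieseker's extension lemma) are already available — so the only real care needed is in keeping the two cases of the quasi-ampleness argument straight and in carrying out the Chern-class and Segre-class bookkeeping, in particular the identity $\int_Z \xi^3 = \int_X s_{(1,1)}(\OM_X)$, with consistent sign conventions.
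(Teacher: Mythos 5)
Your argument is correct and mirrors the paper's proof: the same relative cotangent sequence, the same two-case split for quasi-ampleness (fiber case via Corollary \ref{imagefiber}, non-fiber case as an extension of ample line bundles using Theorem \ref{amplerel}), and the same Chern-class computation of the two Schur polynomials. The only cosmetic difference is that you verify the identity $\int_{\P(\OM_X)} \xi^3 = \int_X s_{(1,1)}(\OM_X)$ directly via the Grothendieck relation, whereas the paper cites this as \cite[Lemma 1.8]{G71}.
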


\begin{proof}
We will first show that $\OM_X$ is quasi-ample with respect to $U$. Let $\ga:C \to X$ be a non-constant morphism from a complete nonsingular curve $C$, such that $\ga(C) \cap U \neq \emptyset$. If $\ga(C)$ is in a fiber of $f$, then by (\ref{imagefiber}), $\ga^*\OM_X$ is ample.

Now suppose $\ga(C)$ is not in a fiber of $f$, so $f \circ \ga: C \to Y$ is non-constant. We have the following short exact sequence:
\[ \sesshort \ga^*f^*\om_Y.\ga^*\OM_X.\ga^*\om_{X/Y}.. \]
Since $\om_Y$ is ample, $\ga^*f^*\om_Y$ is ample on $C$.  By (\ref{amplerel}), $\om_{X/Y}$ is ample on $X$, hence $\ga^*\om_{X/Y}$ is ample on $C$.  Thus, $\ga^*\OM_X$ is ample.  Therefore, $\OM_X$ is quasi-ample with respect to $U$.  Note also that $\OM_X$ is an extension of two nef line bundles, namely $f^* \om_Y$ and $\om_{X/Y}$, so $\OM_X$ is nef.

We now will show that the Schur polynomials are positive.  Let $\la \in \LA (2,2)$, then $\la = (1,1)$ or $\la = (2,0)$, and by definition, 
\[ s_{(1,1)}(\OM_X) = c_1(\OM_X)^2 - c_2(\OM_X) \: \text{and} \: s_{(2,0)}(\OM_X) =  c_2(\OM_X). \]
Using the short exact sequence
\[ \sesshort f^*\om_Y.\OM_X.\om_{X/Y}., \]
we see that,
\[ c_1(\OM_X) = c_1(f^*\om_Y) + c_1(\om_{X/Y}) \: \: \text{and} \: \: c_2(\OM_X) = c_1(f^*\om_Y) \cdot c_1(\om_{X/Y}). \]
Thus,
\[ s_{(1,1)}(\OM_X) = c_1(\OM_X)^2 - c_2(\OM_X) =  c_1(f^*\om_Y) \cdot c_1(\om_{X/Y}) +  c_1(\om_{X/Y})^2 \]
and
\[ s_{(2,0)}(\OM_X) =  c_1(f^*\om_Y) \cdot c_1(\om_{X/Y}). \]
 Since $\om_Y$ is ample, $c_1(\om_Y)= \sum m_i[y_i]$, where $y_i \in X$ are points and $\sum m_i>0$. Since $f$ is flat,  $c_1(f^* \om_Y) = f^* c_1(\om_Y) = \sum n_i [X_{y_i}]$.  Now, $\om_{X/Y}|_{X_y}= \om_{X_y}$ is ample for all $y \in Y$ and $\deg (\om_{X_y}) = 2g(X_y) -2 \geq 2$ is constant for all $y \in Y$.  Thus,
\[ \int_X c_1(f^*\om_Y)\cdot c_1(\om_{X/Y}) = \sum n_i(2g-2)>0. \]
Furthermore, $\om_{X/Y}$ is an ample line bundle on $X$, by (\ref{amplerel}), so $\int_X c_1(\om_{X/Y})^2 > 0$.  Thus,
\[ \int_X s_{(1,1)}(\OM_X) = \int_X  c_1(f^*\om_Y) \cdot c_1(\om_{X/Y}) +  c_1(\om_{X/Y})^2 > 0, \]
\[ \int_X s_{(2,0)}(\OM_X) = \int_X c_1(f^*\om_Y) \cdot c_1(\om_{X/Y}) >0. \]
Since $\OM_X$ is nef, to show that $\sO_{\P(\OM_X)}(1)$ is big, it suffices to show that $\sO_{\P(\OM_X)}(1)$ has positive top intersection. But by \cite[Lemma 1.8]{G71}, this is equivalent to showing $\int_X s_{(1,1)}(\OM_X) >0$, hence $\sO_{\P(\OM_X)}(1)$ is big.
\end{proof}

We will now consider the following tower
\[ \xymatrix{X^n \ar[r]^{f_n} & X^{n-1} \ar[r]^{f_{n-1}} & X^{n-2} \ar[r]^{f_{n-2}} & \cdots \cdots \ar[r]^{f_3} & X^2 \ar[r]^{f_2} & X^1} \]
where each $X^i$ is a smooth projective variety over $\C$ of dimension $i$, and for $2 \leq i \leq n $, $f_i:X^i \to X^{i-1}$ is a smooth, projective morphism with $\var(f_i) = \dim X^{i-1}$.

We first prove one generalization

\begin{thm}\label{schurtower}
Let
\[ \xymatrix{X^n \ar[r]^{f_n} & X^{n-1} \ar[r]^{f_{n-1}} & X^{n-2} \ar[r]^{f_{n-2}} & \cdots \cdots \ar[r]^{f_3} & X^2 \ar[r]^{f_2} & X^1} \]
where each $X^i$ is a smooth projective variety over $\C$ of dimension $i$, and each $f_i:X^i \to X^{i-1}$ is a smooth, projective morphism with $\var(f_i)=\dim X^{i-1}$. Then,  $\OM_{X^n}$ is nef and for all $\la \in \LA (n,n)$, the corresponding Schur polynomial is positive, that is $\int_X s_\la (\OM_{X^n}) > 0$.  In particular $\sO_{\P(\OM_{X^n})}(1)$ is big.
\end{thm}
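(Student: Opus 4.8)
The plan is to use the tower to equip $\OM_{X^n}$ with a filtration by line bundles, prove that each graded piece is nef, and then reduce both the nefness statement and the Schur positivity to a single positive intersection number that is evaluated by descending the tower. Concretely, for $1\le i\le n$ let $g_i\colon X^n\to X^i$ denote the composite $f_{i+1}\circ\cdots\circ f_n$, so that $g_n=\id$ and $g_{n-1}=f_n$. Since each $f_i$ has relative dimension $1$, in the relative cotangent sequence $0\to f_i^*\OM_{X^{i-1}}\to\OM_{X^i}\to\OM_{X^i/X^{i-1}}\to 0$ the quotient $\OM_{X^i/X^{i-1}}=\om_{X^i/X^{i-1}}$ is a line bundle; pulling these sequences back along the $g_i$ and splicing them produces a filtration $0=F_0\subset F_1\subset\cdots\subset F_n=\OM_{X^n}$ whose graded pieces are the line bundles $L_i:=g_i^*\om_{X^i/X^{i-1}}$ for $2\le i\le n$ and $L_1:=g_1^*\om_{X^1}$. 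Setting $\ell_i:=c_1(L_i)$ we get $c(\OM_{X^n})=\prod_{i=1}^{n}(1+\ell_i)$, so by the (dual) Jacobi--Trudi identity $s_\lambda(\OM_{X^n})=s_{\lambda^T}(\ell_1,\ldots,\ell_n)$ for every $\lambda\in\Lambda(n,n)$: a homogeneous polynomial of degree $n$ in the $\ell_i$ with nonnegative integer coefficients in which the monomial $\ell_1\ell_2\cdots\ell_n$ occurs with strictly positive coefficient (the number of standard tableaux of shape $\lambda^T$, which is at least $1$ since $\lambda^T\vdash n$).

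Next I show each $L_i$ is nef. By Theorem~\ref{genus}, $X^1$ is a curve of genus $\ge 2$, so $\om_{X^1}$ is ample and $L_1=g_1^*\om_{X^1}$ is nef; for $i\ge 2$ it suffices to prove that $\om_{X^i/X^{i-1}}$ is nef on $X^i$. Let $\ga\colon C\to X^i$ be non-constant with $C$ a nonsingular complete curve. If $f_i\circ\ga$ is constant, then $\ga$ maps $C$ finitely onto a fibre of $f_i$, a curve of genus $\ge 2$, so $\ga^*\om_{X^i/X^{i-1}}=\ga^*\om_{\mathrm{fibre}}$ has positive degree. Otherwise $h:=f_i\circ\ga\colon C\to X^{i-1}$ is non-constant; then $Z:=X^i\times_{X^{i-1}}C$ is a smooth projective surface ($f_i$ being smooth), $Z\to C$ is a smooth projective family of curves of genus $\ge 2$, the graph of $\ga$ is a section $\si\colon C\to Z$, and $\si^*\om_{Z/C}=\ga^*\om_{X^i/X^{i-1}}$. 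If $Z\to C$ is nonisotrivial, then $\om_{Z/C}$ is an ample line bundle by Theorem~\ref{amplerel}, so $\si^*\om_{Z/C}$ has positive degree; if $Z\to C$ is isotrivial, an \'etale cover $C'\to C$ trivializes it, and pulled back to $C'$ the bundle $\si^*\om_{Z/C}$ becomes a pullback of $\om_F$ for the constant fibre $F$ (a curve of genus $\ge 2$), hence has degree $\ge 0$. In every case $\deg\ga^*\om_{X^i/X^{i-1}}\ge 0$, so $\om_{X^i/X^{i-1}}$, and therefore $L_i$, is nef. Since $\OM_{X^n}$ is an iterated extension of the nef line bundles $L_i$, and extensions of nef bundles are nef (\cite{H66}, \cite[6.1.B]{LII}), $\OM_{X^n}$ is nef; equivalently $\sO_{\P(\OM_{X^n})}(1)$ is nef.

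It remains only to show $\int_{X^n}\ell_1\cdots\ell_n>0$, after which everything else follows. Writing $m_i:=c_1(\om_{X^i/X^{i-1}})$ for $i\ge 2$ and $m_1:=c_1(\om_{X^1})$, we have $\ell_i=g_i^*m_i$, and for $i\le n-1$ the map $g_i$ factors through $f_n$; hence by the projection formula $\int_{X^n}\ell_1\cdots\ell_n=\int_{X^{n-1}}(\ell_1\cdots\ell_{n-1})'\cdot f_{n*}m_n$, where $(\ell_1\cdots\ell_{n-1})'$ is the analogous product for the $(n-1)$-step tower and $f_{n*}c_1(\om_{X^n/X^{n-1}})=(2g_n-2)\,[X^{n-1}]$ with $g_n\ge 2$ the genus of the fibres of $f_n$. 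Descending the tower this way, down to $\int_{X^1}m_1=2g_1-2$, gives $\int_{X^n}\ell_1\cdots\ell_n=\prod_{i=1}^{n}(2g_i-2)>0$, since $g_1=g(X^1)$ and all the fibre genera are $\ge 2$ by Theorem~\ref{genus}. Now fix $\lambda\in\Lambda(n,n)$: every degree-$n$ monomial $\ell^\alpha$ has $\int_{X^n}\ell^\alpha\ge 0$, being a product of $n$ nef divisor classes on the $n$-fold $X^n$, while $\ell_1\cdots\ell_n$ appears in $s_\lambda(\OM_{X^n})=s_{\lambda^T}(\ell_1,\ldots,\ell_n)$ with positive coefficient; hence $\int_{X^n}s_\lambda(\OM_{X^n})>0$. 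Finally $\sO_{\P(\OM_{X^n})}(1)$ is a nef line bundle on $\P(\OM_{X^n})$, which has dimension $2n-1$, and its top self-intersection there equals $\int_{X^n}s_{(1^n)}(\OM_{X^n})$ (the degree-$n$ Segre class; cf. \cite[Lemma~1.8]{G71}); since $(1^n)\in\Lambda(n,n)$ this is $>0$, so $\sO_{\P(\OM_{X^n})}(1)$ is big.

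The step I expect to be the main obstacle is the nefness of the relative canonical bundles $\om_{X^i/X^{i-1}}$ for $i\ge 3$: the argument above bootstraps it from the surface case (Theorem~\ref{amplerel}) by base-changing to a curve mapping into $X^{i-1}$, and some care is required because such a base change can turn a family of maximal variation into an isotrivial one, which Theorem~\ref{amplerel} does not address and which must be handled by the separate trivialization argument.
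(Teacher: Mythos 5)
Your proof is correct, and it takes a genuinely different route from the paper's. The paper argues by induction on $n$: at each step it uses only the two-term cotangent sequence $0\to f_n^*\OM_{X^{n-1}}\to\OM_{X^n}\to\om_{X^n/X^{n-1}}\to 0$, establishes nefness of $\om_{X^n/X^{n-1}}$ via Viehweg's result on weak positivity of $(f_n)_*\om_{X^n/X^{n-1}}^m$ together with relative ampleness, and then expands $s_\la(\OM_{X^n})$ as $\sum_k s_{\la/(1^k)}(f_n^*\OM_{X^{n-1}})\cdot c_1(\om_{X^n/X^{n-1}})^k$ using skew Schur functions and Pieri/Littlewood--Richardson, with strict positivity of the $k=1$ term fed in by the inductive hypothesis. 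You instead unroll the whole tower at once: you pass to the full line bundle filtration with graded pieces $L_i=g_i^*\om_{X^i/X^{i-1}}$, invoke the dual Jacobi--Trudi identity to write $s_\la(\OM_{X^n})=s_{\la^T}(\ell_1,\ldots,\ell_n)$ as a nonnegative combination of monomials in the $\ell_i$, and isolate a single strictly positive term $\int_{X^n}\ell_1\cdots\ell_n=\prod(2g_i-2)$ computed directly by the projection formula. Your nefness argument for each $\om_{X^i/X^{i-1}}$ is also different: instead of the pushforward positivity result the paper uses, you base-change along a curve mapping to $X^{i-1}$ and apply Theorem~\ref{amplerel} to the resulting fibred surface, correctly flagging and handling the case where the base-changed family becomes isotrivial (which Theorem~\ref{amplerel} does not cover) by passing to an \'etale trivializing cover. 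Both arguments ultimately rest on the same deep input (positivity of direct images of relative pluricanonical sheaves, here packaged through Theorem~\ref{amplerel}), so neither is more elementary in substance; what your approach buys is a cleaner, non-inductive positivity argument in which the strictly positive contribution is an explicit product of genera, while the paper's one-step induction keeps the combinatorics shallower at the cost of carrying a more opaque inductive positivity statement. One small point worth stating explicitly when writing this up: the identity $s_\la(c_1,\ldots,c_n)=s_{\la^T}(\ell_1,\ldots,\ell_n)$ requires the observation that the paper's $n\times n$ determinant in the $c_i$, after padding $\la$ with zeros, agrees with the dual Jacobi--Trudi determinant of the appropriate size since the extra rows form a unitriangular block.
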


\begin{proof}
We will prove this by induction on $n$.  By (\ref{above}), the statement is true for $n=2$, and we assume it holds for $n-1$.  Let $X:=X^n$, then we have the short exact sequence
\[ 0 \to f_n^* \OM_{X^{n-1}} \to \OM_X \to \om_{X/X^{n-1}} \to 0. \]
Now $ \OM_{X^{n-1}}$ is nef by induction, so $f_n^* \OM_{X^{n-1}}$ is nef.  Since $f_n$ is smooth and $\om_{X/X^{n-1}}$ is $f_n$-ample, $(f_n)_* \om_{X/X^{n-1}}^m$ is nef for all $m>0$, by \cite[6.22]{V95}.  Additionally, since $\om_{X/X^{n-1}}$ is $f_n$-ample,  for $m \gg 0$, the natural map 
\[ f_n^*(f_n)_*\om_{X/X^{n-1}}^m \to \om_{X/X^{n-1}}^m \]
is surjective.  Thus $\om_{X/X^{n-1}}$ is nef.  Hence $\OM_X$ is nef.

Let $\la \in \LA(n,n)$.  Define $d_i:=c_i(\OM_X)$, $\al_i := c_i ( f_n^*\OM_{X^{n-1}})$ and $\be := c_1(\om_{X/X^{n-1}})$.  Then from the short exact sequence 
\[ \sesshort f_n^*\OM_{X^{n-1}}.\OM_X.\om_{X/X^{n-1}}., \]
we have, 
\[ d_i = \al_i +\al_{i-1}\be,\: \text{where $\al_0=1$ and $\al_i=0$ for $i \notin [0,n-1].$} \]
Thus, by \cite[Exercise 4 in 5.2]{F97} ,
\begin{eqnarray*}
s_\la(\OM_X) & = & s_\la (d_1, \ldots, d_n) \\
& = & s_\la (\al_1, \ldots, \al_{n-1}, \be) \\
& = & \sum_{\mu \subset \la} s_{\la / \mu}(\al_1, \ldots, \al_{n-1})s_\mu(\be). \\
\end{eqnarray*} 
Now, $\be = c_1(\om_{X/X^{n-1}})$, so if $\mu = (\mu_1, \ldots, \mu_n)$ and $\mu_1 \neq 1$, then $s_\mu(\be)=0$.  Also, note that $s_{(1^k)}(\om_{X/X^{n-1}}) = c_1(\om_{X/X^{n-1}})^k$.  Hence,
\[ s_\la(\OM_X)  = \sum_{k=1}^{n} s_{\la / (1^k)}(f_n^*\OM_{X^{n-1}}) \cdot c_1(\om_{X/X^{n-1}})^k. \]  
Let $1 \leq k \leq n-1$ and $\mu \in \LA(n-k, n-k)$, then 
\[ s_\mu \cdot s_{(1^k)} = \sum_\nu c_{\mu / (1^k)}^\nu s_{\nu} \: \: \text{and} \: \: s_{\nu /(1^k)} = \sum_\mu  c_{\mu / (1^k)}^\nu s_\mu.\]
By \cite[Proposition in 1.1]{F97}, $c_{\mu / (1^k)}^\nu = 1$ if $\nu$ can be obtained from $\mu$ by adding $k$ boxes, no two of which are in the same row, and zero otherwise.  Thus, setting $\nu = \la$,
\[s_{\la/(1^k)} = \sum_\mu s_\mu, \]
where the sum is taken over $\mu \in \LA(n-k, n-k)$ such that $\mu$ can be obtained from $\la$ by subtracting $k$ boxes, no two of which are in the same row.  Therefore,
\[ s_\la(\OM_X)  = \sum_{k=1}^{n} \left( c_1(\om_{X/X^{n-1}})^k \sum_{\mu \in \LA(n-k,n-k)} s_\mu(f_n^*\OM_{X^{n-1}}) \right), \]
where the second sum is taken over $\mu \in \LA(n-k, n-k)$ such that $\mu$ can be obtained from $\la$ by subtracting $k$ boxes, now two of which are in the same row.

Consider the first term, $c_1(\om_{X/X^{n-1}}) \sum_{\mu} s_\mu(f_n^*\OM_{X^{n-1}}).$  By induction, for $\mu \in \LA(n-1,n-1)$, $s_\mu (\OM_{X^{n-1}})$ is a positive polynomial, that is  $ s_\mu (\OM_{X^{n-1}}) = \sum m_iy_i$, where $y_i \in X^{n-1}$ are points and $\sum m_i >0$.  Now, $\om_{X/X^{n-1}}|_{X_y} \iso \om_{X_y}$ is ample on $X_y$ for all $y \in X^{n-1}$, thus
\[ \int_X c_1(\om_{X/X^{n-1}}) \cdot s_\mu (f_n^*\OM_{X^{n-1}}) >0. \]

Since $f_n^*\OM_{X^{n-1}}$ and  $\om_{X/X^{n-1}}$ are nef, for $2 \leq k \leq n-1$ and $\mu \in \LA(n-k, n-k)$ as above,
\[\int_X c_1(\om_{X/X^{n-1}})^k \cdot s_{\mu}(f_n^*\OM_{X^{n-1}}) \geq 0,\]
and
\[\int_X c_1(\om_{X/X^{n-1}})^n \geq 0. \]
Therefore, $\int_X s_\la(\OM_X) > 0$.   In particular, this holds for $\la = (1^n)$, so by  \cite[Lemma 1.8]{G71} $\sO_{\P(\OM_{X^n})}(1)$ is big.
\end{proof}

We continue to assume we have a tower of varieties
\[ \xymatrix{X^n \ar[r]^{f_n} & X^{n-1} \ar[r]^{f_{n-1}} & X^{n-2} \ar[r]^{f_{n-2}} & \cdots \cdots \ar[r]^{f_3} & X^2 \ar[r]^{f_2} & X^1} \]
where each $X^i$ is a smooth projective variety over $\C$ of dimension $i$, and each $f_i:X^i \to X^{i-1}$ is a smooth, projective morphism with the property that $\var (f_i) = \dim X^{i-1}$.  For $1 \leq i \leq n-1$, define
\[ B_i := \{ p \in X^i|\: \al_i: \coh 0. X^{i+1}_p.f_{i+1}^*T_{X^i}|_{X_p^{i+1}}. \to \coh 1.X^{i+1}_p. T_{X^{i+1}_p}. \text{ is not injective} \} \subset X^i \]
Note, that since each $f_{i+1}$ is of maximum variation, $X^i - B_i$ is an open dense set.  Set $U_1 := X^1$, and for  $2 \leq i \leq n$ define open sets $U_i \subset X_i$ as 
\[U_i := f_i^{-1}(U_{i-1}  - B_{i-1}). \]

\begin{lemma} \label{moduli} In the above setting, the moduli map $\mu_{i-1}: V_{i-1}:= U_{i-1} - B_{i-1} \to M_g$ induced by the family $f_i: X^i \to X^{i-1}$ is quasi-finite onto its image.
\end{lemma}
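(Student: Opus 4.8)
The plan is to prove that $\mu_{i-1}$ is quasi-finite onto its image by deriving, from the existence of a positive-dimensional fibre, a contradiction with the injectivity of $\al_{i-1}$ on $V_{i-1}$. Throughout I use that, by Theorem \ref{genus}, every fibre of $f_i$ is a smooth projective curve of one fixed genus $g\ge 2$, so that $M_g$ exists as a quasi-projective (coarse) moduli variety and $f_i$ induces a genuine morphism $X^{i-1}\to M_g$ restricting to $\mu_{i-1}$ on $V_{i-1}$; I also use the standard identification, at a point $p\in X^{i-1}$, of $\al_{i-1}$ with the Kodaira--Spencer map $\rho_{f_i,p}\colon T_{X^{i-1},p}\to H^1(X^i_p,T_{X^i_p})$ of $f_i$. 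This identification comes from restricting the relative tangent sequence $0\to T_{X^i/X^{i-1}}\to T_{X^i}\to f_i^*T_{X^{i-1}}\to 0$ to $X^i_p$ and using $f_i^*T_{X^{i-1}}|_{X^i_p}\cong T_{X^{i-1},p}\ten_{\C}\mathcal O_{X^i_p}$ together with $H^0(X^i_p,\mathcal O_{X^i_p})=\C$, which makes the connecting homomorphism equal to $\al_{i-1}$; since $V_{i-1}=U_{i-1}-B_{i-1}$, this map is injective for every $p\in V_{i-1}$.

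First, I would assume for contradiction that some fibre of $\mu_{i-1}|_{V_{i-1}}$ has an irreducible component $Z$ with $\dim Z\ge 1$. Then all the curves $X^i_z$, $z\in Z$, lie in one isomorphism class, say that of a curve $C$. Picking a smooth projective curve $B$ together with a non-constant morphism $h\colon B\to Z\subseteq V_{i-1}$ (for instance the normalization of an irreducible curve inside $Z$), the pulled-back family $h^*X^i\to B$ is smooth, projective and isotrivial. By the \'etale-triviality of isotrivial smooth projective families recalled in the introduction, there is a finite \'etale cover $\psi\colon B'\to B$ over which the family becomes trivial, i.e. $(h\psi)^*X^i\cong C\times B'$; in particular the Kodaira--Spencer map of $(h\psi)^*X^i\to B'$ vanishes identically.

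On the other hand, naturality of the Kodaira--Spencer construction under base change gives, writing $\phi:=h\psi\colon B'\to X^{i-1}$,
\[ \rho_{\phi^*X^i,\,b'}\;=\;\al_{i-1}\!\big(\phi(b')\big)\com d\phi_{b'}\colon T_{B',b'}\to H^1\big(X^i_{\phi(b')},\,T_{X^i_{\phi(b')}}\big) \]
for every $b'\in B'$. For general $b'$ the differential $d\phi_{b'}$ is non-zero because $\phi$ is non-constant, while $\al_{i-1}(\phi(b'))$ is injective because $\phi(b')\in Z\subseteq V_{i-1}$, so the composite is non-zero --- contradicting its vanishing. Hence every fibre of $\mu_{i-1}|_{V_{i-1}}$ is finite, i.e. $\mu_{i-1}$ is quasi-finite onto its image.

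The one non-formal ingredient is the identification of $\al_{i-1}$ with the Kodaira--Spencer differential together with its naturality under base change; but this is classical and is already implicit in the paper's use of $\var(f_i)$ and its definition of $B_{i-1}$, so the main obstacle here is organizational rather than substantive. Equivalently, one may phrase the whole argument stack-theoretically: the smooth family $f_i|_{f_i^{-1}(V_{i-1})}\to V_{i-1}$ induces a morphism $V_{i-1}\to\mathcal M_g$ whose differential at each point is the injective map $\al_{i-1}$, hence this morphism is unramified and so quasi-finite, and composing with the quasi-finite coarse moduli morphism $\mathcal M_g\to M_g$ again yields the statement.
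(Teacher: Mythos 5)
Your argument is correct and rests on the same essential mechanism as the paper's: a positive-dimensional fibre of the moduli map gives a subfamily of $f_i$ with vanishing Kodaira--Spencer class, which contradicts the injectivity of $\al_{i-1}$ on $V_{i-1}$. The paper runs this more directly: for $y$ whose fibre is isomorphic to infinitely many others it picks a connected closed $Z\ni y$ over which $f_i$ restricts to a variation-zero family, observes that the connecting map $H^0\bigl((X_Z)_y,f_Z^*T_Z\bigr)\to H^1\bigl((X_Z)_y,T_{(X_Z)_y}\bigr)$ is then not injective, and uses the commutative square induced by $T_Z\hookrightarrow T_{X^{i-1}}|_Z$ to conclude $y\in B_{i-1}$, i.e.\ $F_{i-1}\subseteq B_{i-1}$. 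You instead argue by contradiction, pull the family back along a non-constant curve $B\to Z$, pass to a finite \'etale cover where the family becomes trivial so that the Kodaira--Spencer class vanishes identically, and then invoke naturality of Kodaira--Spencer under base change (plus $d\phi\neq 0$ at a general point, valid in characteristic zero). Both hinge on the identification of $\al_{i-1}$ with the Kodaira--Spencer differential and its functoriality; what your version buys is that by working after \'etale trivialization you get vanishing of Kodaira--Spencer at every point of $B'$ rather than relying on ``variation zero implies non-injectivity at the chosen point $y$,'' which in the paper's formulation is only immediate at a general point of $Z$. Your stack-theoretic coda (unramified morphism to $\mathcal M_g$ followed by the quasi-finite coarse map) is a clean alternative packaging and is also fine.
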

\begin{proof}  It suffices to show that $F_{i-1}: = \{ y \in X^{i-1} | X^i_y \iso X^i_p \text{ for infinitely many } p \in X^{i-1} \}\subseteq B_{i-1}$.  Let $y \in F_{i-1}$, then there exists a connected closed subscheme $Z \subset X^{i-1}$ with $y \in Z$ such that $f_Z: X_Z:= X^i \times_{X^{i-1}} Z \to Z$ has variation $0$.  In particular, $\coh 0. (X_Z)_y. f_Z^*T_Z|_{(X_Z)_y}. \to \coh 1.  (X_Z)_y. T_{(X_Z)_y}.$ is not injective.  Then from the following commutative diagram

\centerline{
\xymatrix{ \coh 0. (X_Z)_y. f_Z^*T_Z|_{(X_Z)_y}. \ar[r] \ar@{^(->}[d] &\coh 1.  (X_Z)_y. T_{(X_Z)_y}. \ar@{=}[d] \\
\coh 0. X_y. f_i^*T_Y|_{X_y}. \ar[r] & \coh 1.  X_y. T_{X_y}.
}}
\noindent we see that $\coh 0. X_y. f_i^*T_Y|_{X_y}. \to \coh 1.  X_y. T_{X_y}.$ can not be injective, hence $y \in B_{i-1}$.
\end{proof}

\begin{thm}\label{tower1} In the above setting, $\OM_{X^n}$ is quasi-ample with respect to $U_n$.
\end{thm}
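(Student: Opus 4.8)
I would prove this by induction on $n$. The base case $n = 2$ is precisely Theorem \ref{above}: there $U_2 = f_2^{-1}(X^1 - B_1)$, and $B_1$ is exactly the set $B$ appearing in that statement. For the inductive step I assume $\OM_{X^{n-1}}$ is quasi-ample with respect to $U_{n-1}$, fix a non-constant morphism $\ga : C \to X^n$ from a complete nonsingular curve with $\ga(C) \cap U_n \neq \emptyset$, choose $c_0 \in C$ with $\ga(c_0) \in U_n = f_n^{-1}(V_{n-1})$ where $V_{n-1} := U_{n-1} - B_{n-1}$, and set $g := f_n \circ \ga$, so that $g(c_0) \in V_{n-1} \subseteq U_{n-1}$. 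The task is to show $\ga^*\OM_{X^n}$ is ample on $C$, and I would split into two cases according to whether $g$ is constant.

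If $g$ is non-constant, I would pull the relative cotangent sequence
\[ 0 \to f_n^*\OM_{X^{n-1}} \to \OM_{X^n} \to \om_{X^n/X^{n-1}} \to 0 \]
back along $\ga$. The resulting subbundle $g^*\OM_{X^{n-1}}$ is ample by the inductive hypothesis, since $g$ is non-constant with $g(C) \cap U_{n-1} \ni g(c_0)$; the quotient is the line bundle $\ga^*\om_{X^n/X^{n-1}}$. To see the quotient is ample, I would pass to the normalization $\nu : \tilde C \to g(C) \subset X^{n-1}$ of the image curve and form $X^n_{\tilde C} := X^n \times_{X^{n-1}} \tilde C$, which is a smooth projective surface carrying a smooth morphism to $\tilde C$ whose fibers have genus at least $2$ (Theorem \ref{genus}). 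Because $g(C)$ meets $V_{n-1}$, on which the moduli map is quasi-finite onto its image by Lemma \ref{moduli}, the family $X^n_{\tilde C} \to \tilde C$ is nonisotrivial, hence $\om_{X^n_{\tilde C}/\tilde C}$ is ample by Theorem \ref{amplerel}. As $C$ is nonsingular, $\ga$ lifts to a morphism $C \to X^n_{\tilde C}$, which is finite onto its image and along which $\om_{X^n_{\tilde C}/\tilde C}$ pulls back to $\ga^*\om_{X^n/X^{n-1}}$; thus the quotient is ample, and $\ga^*\OM_{X^n}$, being an extension of ample bundles, is ample.

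If instead $g$ is constant, with value $p := g(c_0) \in V_{n-1}$, then $\ga(C)$ lies in the fiber $X^n_p$, a nonsingular curve of genus at least $2$ (Theorem \ref{genus}), so $\ga : C \to X^n_p$ is finite and it suffices to show that $\sE := \OM_{X^n}|_{X^n_p}$ is ample. Restricting the relative cotangent sequence to $X^n_p$ gives
\[ 0 \to \sO_{X^n_p}^{\oplus(n-1)} \to \sE \to \om_{X^n_p} \to 0. \]
Since $p \notin B_{n-1}$, the boundary map $\al_{n-1}$ of the dual sequence $0 \to T_{X^n_p} \to T_{X^n}|_{X^n_p} \to \sO_{X^n_p}^{\oplus(n-1)} \to 0$ is injective; as $\coh 0.X^n_p.T_{X^n_p}. = 0$ (the genus is at least $2$), the long exact cohomology sequence forces $\coh 0.X^n_p.T_{X^n}|_{X^n_p}. = 0$. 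I would then invoke the criterion that a bundle on a smooth projective curve is ample precisely when every quotient bundle has positive degree. Given a quotient bundle $\sE \onto \sQ$, let $\bar\sK \subseteq \sQ$ be the saturation of the image of $\sO_{X^n_p}^{\oplus(n-1)}$; then $\sQ/\bar\sK$, being a quotient of the line bundle $\om_{X^n_p}$, is $0$ or $\iso \om_{X^n_p}$. In the second case $\deg\sQ = \deg\bar\sK + \deg\om_{X^n_p} > 0$, since $\deg\bar\sK \geq 0$; in the first case $\sQ$ is generically globally generated, so if $\deg\sQ = 0$ then $\sQ \iso \sO_{X^n_p}^{\oplus r}$, and dualizing $\sE \onto \sQ$ would give a nonzero section of $T_{X^n}|_{X^n_p}$, contradicting the vanishing above. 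Hence $\sQ$ has positive degree, $\sE$ is ample, and so is $\ga^*\OM_{X^n}$.

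Combining the two cases completes the induction. I expect the fiber case to be the only genuine obstacle: once $n \geq 3$ the trivial subbundle $f_n^*\OM_{X^{n-1}}|_{X^n_p} \iso \sO_{X^n_p}^{\oplus(n-1)}$ has rank greater than one, so Gieseker's Lemma \ref{gc} does not apply verbatim, and ampleness of $\OM_{X^n}|_{X^n_p}$ must be extracted from the single cohomological hypothesis $p \notin B_{n-1}$ by the degree argument above (alternatively, one can iterate Lemma \ref{gc} on the rank of the trivial summand, checking at each stage that the relevant extension does not split). The rest is a routine adaptation of the proof of Theorem \ref{above}.
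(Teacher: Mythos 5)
Your proof is correct, and it takes a genuinely different route from the paper in both cases of the induction. In the fiber case, the paper passes to the composition $h = f_2 \cdots f_n : X^n \to X^1$, shows the extension $0 \to \sO_{X_y} \to \OM_{X^n}|_{X_y} \to \OM_{X_s}|_{X_y} \to 0$ (with $s = f_2 \cdots f_{n-1}(y)$) does not split, reduces to ampleness of $\OM_{X_s}|_{X_y}$ via Gieseker's Lemma \ref{gc}, and then re-applies the induction to the sub-tower $X_s^n \to \cdots \to X_s^2$, which requires defining $B_{i,s}$, $U_{i,s}$ and verifying $B_{i,s} \subseteq B_i \cap X^i_s$ and $U_i \cap X^i_s \subseteq U_{i,s}$. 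You instead extract the single vanishing $\coh 0.X^n_p.T_{X^n}|_{X^n_p}. = 0$ from $p \notin B_{n-1}$ (together with $g(X^n_p) \geq 2$) and then argue directly that every quotient bundle of $\OM_{X^n}|_{X^n_p}$ has positive degree, invoking the characteristic-zero criterion that this forces ampleness on a curve; this avoids both the trip down to $X^1$ and the sub-tower bookkeeping, and it correctly finesses the point you flag, namely that Lemma \ref{gc} as stated handles only a rank-one trivial subbundle. In the horizontal case, the paper quotes the Viehweg--Zuo type Theorem \ref{pushforward}(1) for the ampleness of $(f_n)_*\om_{X^n/X^{n-1}}^m$ with respect to $X^{n-1} - B_{n-1}$ and deduces ampleness of $\ga^*\om_{X^n/X^{n-1}}$ from the surjection $\ga^*f_n^*(f_n)_*\om^m \to \ga^*\om^m$; you instead base-change over the normalization $\tilde C$ of $g(C)$, use Lemma \ref{moduli} to see the resulting surface-over-curve family is nonisotrivial (the quasi-finiteness of the moduli map on $V_{n-1}$ prevents $g(C)$ from lying in a finite moduli fiber), and then apply the one-dimensional-base result Theorem \ref{amplerel}. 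Your route buys a more elementary and self-contained argument (particularly in the fiber case, where the paper's nested induction is replaced by a degree count); the paper's route has the minor advantage of staying entirely within the ambient tower and of making explicit the role of each $B_{i}$ at every level. One small remark: your fiber argument only uses $p \notin B_{n-1}$ rather than the full $p \in U_{n-1} - B_{n-1}$, so it in fact establishes quasi-ampleness on a potentially larger open set than the stated $U_n$; this is harmless here since $U_n$ is contained in it.
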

\begin{proof} We will prove this by induction on $n$.  By (\ref{above}), the statement is true for $n=2$, and we assume it holds for $n-1$.  Let $X:=X^n$, then we have the short exact sequence
\[ 0 \to f_n^* \OM_{X^{n-1}} \to \OM_X \to \om_{X/X^{n-1}} \to 0. \]
Let $\ga: C \to X$ be a non-constant morphism from a complete nonsingular curve $C$ such that $\ga(C)\cap U_n \neq \emptyset$.  Suppose first that $\ga(C)$ is contained in a fiber of $f_n$, say $\ga(C) \subseteq X_y$ for some $y \in X^{n-1}$.  Since $\ga: C \to X_y$ is finite, it suffices to show that $\OM_X|_{X_y}$ is ample.  Note also that since  $\ga(C)\cap U_n \neq \emptyset$, $y \in U_{n-1}  - B_{n-1}$.  Let $f_2 f_3 \cdots f_{n-1}(y)=s\in X^1$, and let $h=f_2 f_3 \cdots f_{n-1}f_n: X=X^n \to X^1$. Then we have the following short exact sequence
\begin{equation} \label{5}
\sesshort T_{X_s}|_{X_y}.T_{X}|_{X_y}.h^*T_{X^1}|_{X_y}.. 
\end{equation}

I claim that (\ref{5}) doesn't split.  Indeed, suppose it splits, then looking at the long exact sequence
\[ \ldots \to \coh0.X_y.T_{X}|_{X_y}. \to \coh0.X_y.h^*T_{X^1}|_{X_y}. \to \coh1.X_y.T_{X_s}|_{X_y}. \to \coh1.X_y.{T_X}|_{X_y}. \to \ldots \]
we have a surjection $\be: \coh0.X_y.T_{X}|_{X_y}. \onto \coh 0.X_y.h^*T_{X^1}|_{X_y}..$ \
Consider the following commutative diagram,

\centerline{ \xymatrix{ & & & 0 \ar[d] & \\
& 0 \ar[d] & & f_n^*T_{X^{n-1}/X^1}|_{X_y} \ar[d] & & \\
0 \ar[r] & \ar[r] T_{X/X^{n-1}}|_{X_y} \ar[d]\ar[r] &T_X|_{X_y} \ar@{=}[d] \ar[r] & f_n^*T_{X^{n-1}}|_{X_y} \ar[d] \ar[r] & 0 \\
0 \ar[r] & \ar[r] T_{X/X^1}|_{X_y} \ar[d] \ar[r] &T_X|_{X_y} \ar[r] & h^*T_{X^{1}}|_{X_y} \ar[d] \ar[r] & 0 \\
& f_n^*T_{X^{n-1}/X^1}|_{X_y} \ar[d] & & 0 & \\
& 0 &&& \\
}}

\noindent Then taking cohomology, gives

\centerline{\xymatrix{ \ldots \ar[r] & \coh 0.X_y.T_X|_{X_y}. \ar@{=}[d] \ar[r] & \coh 0.X_y. f_n^*T_{X^{n-1}}|_{X_y}. \ar[d] \ar[r]^{\al_{n-1}} & \coh 1. X_y. T_{X/X^{n-1}}|_{X_y}. \ar[d] \ar[r] & \ldots \\
\ldots \ar[r]  & \coh 0.X_y.T_X|_{X_y}. \ar[r]^{\be} & \coh 0.X_y. h^*T_{X^1}|_{X_y}. \ar[r] & \coh 1. X_y. T_{X/X^{1}}|_{X_y}.  \ar[r] & \ldots \\ }}

\noindent Since $y \notin B_{n-1}$, $\al_{n-1}$ is injective, and hence $\im \be = 0$.  But $\be$ is surjective, so  $\coh 0.X_y. h^*T_{X^1}|_{X_y}. = 0,$ a contradiction.  Thus (\ref{5}) does not split, and hence neither does 
\[ \sesshort \sO_{X_y}.\OM_X|_{X_y}.\OM_{X_s}|_{X_y}.. \]
By (\ref{gc}), to show that $\OM_X|_{X_y}$ is ample, it suffices to show that $\OM_{X_s}|_{X_y}$ is ample.

We have
\[ \xymatrix@1{X_s = X^n_s \ar[r]^(.55){(f_n)_s} & X^{n-1}_s \ar[r]^{(f_{n-1})_s}  & \cdots  \cdots \ar[r] & X^3_s  \ar[r]^{(f_3)_s} & X^2_s}, \]
where each $X^i_s$ is a smooth projective variety over $\C$ of dimension $i-1$, and each $(f_i)_s:X^i_s \to X^{i-1}_s$ is a smooth projective morphism with the property that $\var(f_i)_s = \dim(X^{i-1})_s$.   
For $2 \leq i \leq n-1$, define
\[ B_i,s := \{ p \in X^{i}_s |\: \al_{i,s}: \coh 0. X^{i+1}_p.(f_{i+1})_s^*T_{X^i_s}|_{X_p^{i+1}}. \to \coh 1.(X^{i+1}_s)_p.T_{(X^{i+1}_s)_p}.  \text{ is not injective}  \}. \]
 Define open $U_{i,s} \subseteq (X^i)_s$ as follows:
\[ U_{2,s} :=  X^2_s \]
\[ U_{i,s} := (f_i)_s^{-1}(U_{i-1,s} -B_{i-1,s}), \text{ for $3 \leq i \leq n$ }.\]
Then, by induction, $\OM_{X_s}$ is quasi-ample with respect to $U_{n,s} := (f_n)^{-1}_s(U_{n-1,s}  - B_{n-1,s})$. 
 I first claim that $B_{i,s} \subseteq B_i \cap X^i_s.$  Indeed, we have the commutative diagram
 
 \centerline{\xymatrix{ 0 \ar[r] & T_{X^{i+1}_s/X^i_s}|_{X^{i+1}_p} \ar@{=}[d] \ar[r] & T_{X^{i+1}_s}|_{X^{i+1}_p} \ar[r] \ar@{^{(}->}[d] & (f_{i+1})_s^* T_{X^i_s}|_{X^{i+1}_p} \ar[r] \ar@{^{(}->}[d]  & 0 \\
  0 \ar[r] & T_{X^{i+1}/X^i}|_{X^{i+1}_p}  \ar[r] & T_{X^{i+1}}|_{X^{i+1}_p} \ar[r] & f_{i+1}^* T_{X^i}|_{X^{i+1}_p} \ar[r]   & 0 \\ }}

\noindent So taking cohomology, gives

 \centerline {\xymatrix{ \coh 0.X^{i+1}_p. (f_{i+1})_s^* T_{X^i_s}|_{X^{i+1}_p}. \ar@{^{(}->}[d]  \ar[r]^(.6){\al_{i,s}} & \coh  1.X^{i+1}_p. T_{X^{i+1}_p}. \ar@{=}[d] \\
 \coh 0.X^{i+1}_p. f_{i+1}^* T_{X^i}|_{X^{i+1}_p}.   \ar[r]^(.6){\al_i} & \coh  1.X^{i+1}_p. T_{X^{i+1}_p}.
 }}

\noindent Thus if $\al_i$ is injective, $\al_{i,s}$ is injective, so $B_{i,s} \subseteq B_i \cap X^i_s.$

I next claim $(X^i)_s \cap U_i \subseteq U_{i,s} $ for $i \geq 2$.  Indeed, if $i=2$, this follows from the definition of $U_2$.  Suppose it is true for $i-1$, then 
\[ U_i \cap X^i_s = (f_i)_s^{-1}((U_{i-1} \cap X^{i-1}_s) -  (B_{i-1} \cap X^{i-1}_s)) \subseteq (f_i)_s^{-1}(U_{i-1,s} - B_{i-1,s}) = U_{i,s}. \]
Thus, since $y \in X_s \cap (U_{n-1} - B_{n-1}) \subseteq U_{n-1,s} - B_{n-1,s} $, we conclude that $\OM_{X_s}|_{X_y}$ is ample.

Now suppose $\ga(C)$ is not in the fiber of $f_n$, so $f_n \circ \ga: C \to X^{n-1}$ is non-constant. Since $\ga(C) \cap f_n^{-1}(U_{n-1} -  B_{n-1}) \neq \emptyset,$ we have $f_n \ga(C) \cap U_{n-1} \neq \emptyset $.  Thus by induction $\ga^*(f_n^*\OM_{X^{n-1}})$ is ample.  We have the following short exact sequence
\[ \sesshort \ga^*(f_n^*\OM_{X^{n-1}}).\ga^* \OM_X. \ga^*\om_{X/X^{n-1}}.  \]
so to show $\ga^*\OM_X$ is ample, it suffices to show that $\ga^*\om_{X/X^{n-1}}$ is ample.  By (\ref{pushforward}), $f_*\om_{X/X^{n-1}}^m$ is ample with respect to $X_{n-1} - B_{n-1}$, for $m>1$ where 
 $f_*\om_{X/X^{n-1}}^m \neq 0$.  Thus, since $\ga(C) \cap f_n^{-1}(U_{n-1} -  B_{n-1}) \neq \emptyset,$
we have that $\ga^*f^*f_*\om_{X/X^{n-1}}^m$ is ample with respect to $\ga^{-1}f^{-1}(X_{n-1}-B_{n-1})$, an open dense subset of the curve $C$.  Hence, $\ga^*f^*f_*\om_{X/X^{n-1}}^m$ is ample on $C$.  Furthermore, since  $\om_{X/X^{n-1}}$ is $f_n$-ample,
\[ \ga^*f_n^*(f_n)_*\om_{X/X^{n-1}}^m \to \ga^*\om_{X/X^{n-1}}^m \]
is surjective for sufficiently large $m$, and so $\ga^*\om_{X/X^{n-1}}$ is ample.
\end{proof}

\begin{cor} \label{tower2}
Let
\[ \xymatrix{X^n \ar[r]^{f_n} & X^{n-1} \ar[r]^{f_{n-1}} & X^{n-2} \ar[r]^{f_{n-2}} & \cdots \cdots \ar[r]^{f_3} & X^2 \ar[r]^{f_2} & X^1} \]
where each $X^i$ is a smooth projective variety over $\C$ of dimension $i$, and each $f_i:X^i \to X^{i-1}$ is a smooth, projective morphism with the property that for all $y \in X^{i-1}$, $\al_i: \coh 0. X^{i+1}_y.f_{i+1}^*T_{X^i}|_{X_y^{i+1}}. \to \coh 1.X^{i+1}_y. T_{X^{i+1}_y}.$ is  injective.
Then,  $\OM_{X^n}$ is quasi-ample and for all $\la \in \LA (n,n)$, the Schur polynomial is positive, that is $\int_X s_\la (\OM_X) > 0$.  In particular $\OM_{X^n}$ is quasi-ample and  $\sO_{\P(\OM_{X^n})}(1)$ big.
\end{cor}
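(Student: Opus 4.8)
The plan is to obtain this statement by combining Theorems~\ref{schurtower} and~\ref{tower1}: the hypothesis here --- injectivity of each $\al_i$ at \emph{every} point rather than just at a general one --- is precisely the condition that forces all of the exceptional loci $B_i$ to be empty, so that the open set $U_n$ of Theorem~\ref{tower1} becomes all of $X^n$.

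First I would check that the hypothesis implies $\var(f_{i+1}) = \dim X^i$ for each $i$, so that Theorems~\ref{schurtower} and~\ref{tower1} apply. The hypothesis is exactly the statement that $B_i = \emptyset$ for every $i$, where $B_i \subset X^i$ is the locus defined just before Lemma~\ref{moduli}. The inclusion $F_i \subseteq B_i$ established in the proof of Lemma~\ref{moduli} --- which uses only the functoriality of the Kodaira--Spencer map, not the maximal-variation hypothesis --- then forces $F_i = \emptyset$; that is, for every $y \in X^i$ the set $\{\,p \in X^i \mid X^{i+1}_p \iso X^{i+1}_y\,\}$ is finite. By the criterion recalled at the end of Section~1, this gives $\var(f_{i+1}) = \dim X^i$ for all $i$, so the tower satisfies the standing hypotheses of both earlier theorems.

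Next, Theorem~\ref{schurtower} applies verbatim and yields that $\OM_{X^n}$ is nef and that $\int_{X^n} s_\la(\OM_{X^n}) > 0$ for every $\la \in \LA(n,n)$; specializing to $\la = (1^n)$ and invoking \cite[Lemma~1.8]{G71} as in the proof of Theorem~\ref{schurtower} shows that $\sO_{\P(\OM_{X^n})}(1)$ is big. For quasi-ampleness I would trace through the definition of the open sets in Theorem~\ref{tower1}: since $B_{i-1} = \emptyset$ for all $i$, we have $U_1 = X^1$ and $U_i = f_i^{-1}(U_{i-1} - B_{i-1}) = f_i^{-1}(U_{i-1})$, so inductively $U_n = X^n$. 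Theorem~\ref{tower1} then says $\OM_{X^n}$ is quasi-ample with respect to $X^n$, which by Definition~\ref{qamplewrtU} is just quasi-ampleness of $\OM_{X^n}$.

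There is no substantial obstacle: everything of content is already contained in Theorems~\ref{schurtower} and~\ref{tower1}. The only work is the bookkeeping --- recognizing that the pointwise injectivity hypothesis is the vanishing $B_i = \emptyset$, checking that this propagates up the tower to give $U_n = X^n$, and observing (via the inclusion in Lemma~\ref{moduli}) that it supplies the maximal-variation condition needed to invoke those theorems in the first place.
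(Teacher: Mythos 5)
Your proof is correct and follows essentially the same route as the paper: recognize that the pointwise injectivity hypothesis is exactly $B_i = \emptyset$, conclude $U_n = X^n$, and invoke Theorems~\ref{tower1} and~\ref{schurtower}. Where you go slightly beyond the paper's two-line proof is in explicitly verifying that the corollary's hypothesis entails $\var(f_i) = \dim X^{i-1}$ (via the inclusion $F_{i-1} \subseteq B_{i-1}$ from Lemma~\ref{moduli} and the finite-fibers criterion for variation), which the two cited theorems require but the corollary's statement does not explicitly assume; this is a reasonable bit of due diligence that the paper leaves implicit, not a different approach.
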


\begin{proof}  
By the assumption on each $f_i:X^i \to X^{i-1}$, the sets $B_i \subset X^i$ are empty.  Hence each $U_i = X^i$, so by the above theorem (\ref{tower1}), $\OM_{X^n}$ is quasi-ample.  The second statement follows from (\ref{schurtower}).
\end{proof}

Let us also note that the condition on the Kodaira-Spencer maps is necessary for the cotangent bundle is quasi-ample on \emph{all} of $X^n$.  As an example, consider a nonisotrivial smooth projective morphism $f: X \to Y$ from a smooth projective surface to a smooth projective curve.  Suppose there exists $y \in Y$ such that the Kodaira-Spencer map $\rho_{f,y}: T_{Y,y} \to \coh1.X_y. T_{X_y}.$ is zero.  Then $\OM_X|_{X_y} \iso \sO_{X_y} \oplus \OM_{X_y}$, so $\OM_{X}|_{X_y}$ is not ample.

\subsection{Towers of Varieties where the Morphisms are not Smooth} \label{notsmoothsec}

We next weaken the hypothesis on the $f_i$.   Let $X$ be a smooth variety of dimension $n$ and $D \subset X$ a reduced normal crossing divisor.  Recall, that  $\OM_X^1(\log D)$ is the \emph{sheaf of one-forms on $X$ with logarithmic poles along $D$} and is defined as follows: if $z_1, \ldots, z_n$ are local analytic coordinates on $X$, with $D=(z_1\cdots z_l)$, then $\OM_X^1(\log D)$ is locally generated by $\frac {dz_1} {z_1}, \ldots \frac {dz_l} {z_l}, dz_{l+1}, \ldots, dz_n$.  If $D$ has normal crossings, but is not reduced, we abuse notation and write $\OM_X^1(\log D)$ for $\OM_X^1(\log D_{\red})$.  

\begin{lemma} \label{2div} Let $D = D_1 + D_2$ be a normal crossing divisor on a smooth variety $X$.  Suppose $\OM_X^1(\log D_2)$ is quasi-ample with respect to $X - D_2$, then $\OM_X^1(\log D)$ is quasi-ample with respect to $X -D.$
\end{lemma}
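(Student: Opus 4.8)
The plan is to reduce to a single curve and exploit the natural inclusion between the two logarithmic cotangent bundles. Since $D_2\le D$, a rational $1$-form with at worst logarithmic poles along $D_2$ has at worst logarithmic poles along $D$, so there is a canonical injection $\ph\colon\OM_X^1(\log D_2)\into\OM_X^1(\log D)$ of vector bundles of the same rank $n=\dim X$. Working in local analytic coordinates in which $D_2=(z_1\cdots z_k)$ and $D=(z_1\cdots z_l)$ with $k\le l$, one checks that $\coker\ph\cong\bigoplus_{j=k+1}^{l}\sO_{(z_j=0)}$; in particular $\coker\ph$ is supported on $\supp D_1\;(\subseteq\supp D)$, and $\ph$ is an isomorphism over $X-\supp D_1$. (As elsewhere in the paper I may pass to reduced divisors throughout, which changes none of the sheaves or open sets in the statement.)

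Next I would take a non-constant morphism $\ga\colon C\to X$ from a complete nonsingular curve with $\ga(C)\cap(X-D)\ne\emptyset$, and show that $\ga^*\OM_X^1(\log D)$ is ample on $C$. Because $X-D\subseteq X-D_2$, the hypothesis applies, giving that $\sE':=\ga^*\OM_X^1(\log D_2)$ is ample on $C$. Also $\ga(C)\not\subseteq\supp D$, hence $\ga(C)\not\subseteq\supp D_1$, so $\ga^{-1}(\supp D_1)$ is finite and $\ga^*\ph$ is an isomorphism over the dense open $C-\ga^{-1}(\supp D_1)$. A generically bijective morphism of locally free sheaves on the integral curve $C$ has trivial kernel, so $\ga^*\ph$ is injective, and its cokernel $\sT:=\ga^*(\coker\ph)$ is supported on the finite set $\ga^{-1}(\supp D_1)$, hence torsion. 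Thus I would arrive at a short exact sequence
\[ \sesshort \sE'.\sE.\sT. \]
with $\sE:=\ga^*\OM_X^1(\log D)$, where $\sE'$ is ample, $\sT$ is torsion, and $\sE,\sE'$ have the same rank; that is, $\sE$ is an ``upward elementary modification'' of the ample bundle $\sE'$.

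The remaining step is to conclude that $\sE$ itself is ample, and this is where I expect the real work to be. I would invoke the numerical criterion for ampleness of a vector bundle on a nonsingular projective curve: $\sE$ is ample if and only if every quotient bundle of $\sE$ has positive degree (see \cite{H66}). Given a quotient bundle $\sQ$ of $\sE$, let $\sQ'\subseteq\sQ$ be the image of the composite $\sE'\into\sE\onto\sQ$. As a subsheaf of the locally free sheaf $\sQ$ on the nonsingular curve $C$, the sheaf $\sQ'$ is locally free, and it is a quotient of the ample bundle $\sE'$, hence ample, hence $\deg\sQ'>0$. Moreover $\sQ/\sQ'$ is a quotient of $\sE/\sE'=\sT$, hence torsion, so $\deg\sQ=\deg\sQ'+\operatorname{length}(\sQ/\sQ')\ge\deg\sQ'>0$. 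Therefore every quotient bundle of $\sE$ has positive degree, so $\sE=\ga^*\OM_X^1(\log D)$ is ample; since $\ga$ was an arbitrary curve meeting $X-D$, this shows $\OM_X^1(\log D)$ is quasi-ample with respect to $X-D$.

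The main obstacle is precisely this last step, namely knowing that an upward elementary modification of an ample bundle on a curve stays ample. The cleanest route I see is through Hartshorne's numerical characterization of ample vector bundles on curves (equivalently, positivity of the minimal slope in the Harder--Narasimhan filtration); everything else — the inclusion $\ph$ and its local structure, the reduction to a single curve, and the exactness after pullback — is routine.
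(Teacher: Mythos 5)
Your proof is correct and follows the paper's approach: both use the canonical inclusion $\OM_X^1(\log D_2)\into\OM_X^1(\log D)$, note the cokernel is supported on $D_1$, pull back along a curve meeting $X-D$ so the cokernel becomes torsion, and conclude ampleness of $\ga^*\OM_X^1(\log D)$ by extension. You are somewhat more careful than the paper in two places: you verify that the sequence stays exact after pullback (generic injectivity into a torsion-free sheaf on an integral curve), and you spell out the upward-elementary-modification step via the numerical criterion, whereas the paper simply calls its torsion quotient ``ample'' and invokes extensions; incidentally your local computation gives the untwisted residue cokernel $\bigoplus_{j}\sO_{(z_j=0)}$, while the paper writes $\ga^*\sO_{D_1}(D_2|_{D_1})$ with a twist that does not seem to be there, though the discrepancy is harmless since only the support of the cokernel is used.
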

\begin{proof}  Let $\ga: C \to X$ be a non-constant morphism from a complete nonsingular curve such that $\ga(C) \cap (X - D) \neq \emptyset$.  Without loss of generality, we may assume $D_1$ and $D_2$ do not contain any common components.  If $\ga(C) \cap D_1 = \emptyset$, then $\ga^* \OM_X^1 (\log D_2) \iso \ga^* \OM_X^1 \log (D)$ is ample.  If $\ga(C) \cap D_1 \neq \emptyset$, then since $\ga(C) \nsubseteq D_1$, $\ga(C) \cap D_1$ must consist of a finite number of points.  Hence we have the short exact sequence 
\[\sesshort \ga^*\OM_X^1(\log D_2). \ga^*\OM_X^1(\log D). \ga^*\sO_{D_1}(D_2|_{D_1}). \]
with $ \ga^*\OM_X^1(\log D_2)$ and $ \ga^*\sO_{D_1}(D_2|_{D_1})$ ample.  Thus $\ga^*\OM_X^1(\log D)$ is also ample.
\end{proof}

We consider the following:
\[ \xymatrix{X^n \ar[r]^{f_n} & X^{n-1} \ar[r]^{f_{n-1}} & X^{n-2} \ar[r]^{f_{n-2}} & \cdots \cdots \ar[r]^{f_3} & X^2 \ar[r]^{f_2} & X^1} \]
where each $X^i$ is a smooth projective variety over $\C$ of dimension $i$. Suppose for $2 \leq i \leq n $,
\begin{enumerate}
\item  $f_i:X^i \to X^{i-1}$ is a flat projective morphism with $\var(f_i) = \dim X^{i-1}$,

\item $\om_{X^i_y}$ is ample for all $y \notin \DE_{i-1,i}$, where we define\[ \DE_{i-1,i} := \{ y \in X^{i-1} | f_i^{-1}(y) \text{ is singular} \}. \]
\end{enumerate}
Put $\DE_{2,1} := f_2^*\DE_{1,2}$, and define recursively for all $i <n$,
\[ \DE_{i, i-1} := f_{i}^*((\DE_{i-1,i-2} + \DE_{i-1,i})_{\red}) \subset X^{i}.\]
We will also write $\DE_{i-1}:= (\DE_{i-1,i-2} + \DE_{i-1,i})_{\red} \subset X^{i-1}$, so that $\DE_{i, i-1} = f_{i}^*(\DE_{i-1})$.
Let $\DE_{1,i}$ be the discriminant locus of $f_2 \circ \ldots \circ f_i: X^i \to X^1$, that is, $\DE_{1,i} := \{s \in X^1 | f_i^{-1} \cdots f_2^{-1}(s) \text{ is singular} \}$, and $\DE_{i,1}:= f_i^* \cdots f_2^* \DE_{1,i}$.  

Define
\[B_i' := \{p \in X^i - \DE_{i, i+1} |\al_i: \coh 0. X^{i+1}_p.f_{i+1}^*T_{X^i}|_{X_p^{i+1}}. \to \coh 1.X^{i+1}_p. T_{X^{i+1}_p}. \text{ is not injective} \}. \] 
Set $B_1:= B_1'$ and for $2 \leq i \leq n$, set

\[ B_i := B_i' + f_i^{-1}(B_{i-1}).\]

\noindent We will show that in the above setting if $\DE_i$ and $\DE_{i+1, i}$ are normal crossing divisors for $1 \leq i \leq n-1$, then $\OM_{X^n}^1(\log \DE_{n,n-1})$ is quasi-ample with respect to $X^n - \DE_{n,n-1} - f_n^{-1}(B_{n-1})$.  We follow the same ideas as before and will prove this by induction.  To prove the case where $n=2$, we need the following lemma.

\begin{lemma} \textup{\cite[1.4]{V00}} \label{notsmoothcurve}
Let $f:X \to Y$ be a nonisotrivial morphism between smooth projective varieties of dimensions 2 and 1, respectively.  Let $\DE_{1,2} \subset Y$ be the discriminant divisor, and $\DE_{2,1}:=f^*\DE_{1,2}$.  Then $2g(Y) -2 + \deg \DE_{1,2} \geq 1$.
\end{lemma}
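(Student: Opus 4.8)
The plan is to deduce the inequality from the positivity of $f_*\om_{X/Y}$ together with an Arakelov-type slope bound. Replacing $X$ by its relatively minimal model — which changes neither $f_*\om_{X/Y}$ nor enlarges the discriminant — I may assume $f$ relatively minimal. The general fibre $F$ of $f$ is a smooth projective curve; if $g(F)=0$ then all fibres are $\iso\P^1$ and $f$ is isotrivial, contrary to hypothesis, so $g:=g(F)\ge 1$, and I will work in the case $g\ge 2$, which is the one in which this Lemma is applied (the smooth fibres being canonically polarised); the genus-one case is of a different nature and is not needed here.

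Next I would set up the two inequalities. On the positivity side, since $f$ is nonisotrivial one has $\deg f_*\om_{X/Y}>0$: by Noether's formula $12\deg f_*\om_{X/Y}=K_{X/Y}^2+\sum_s\delta_s$ with all $\delta_s\ge 0$, and the right-hand side is positive — either some fibre is singular, so some $\delta_s>0$, or else $f$ is smooth and then $\om_{X/Y}$ is ample by Theorem~\ref{amplerel}, whence $K_{X/Y}^2>0$ (this is also a consequence of the general positivity results of Viehweg and Koll\'ar already invoked in the paper, \cite{V83II}, \cite{K87}). On the other side, the Kodaira--Spencer map of the family, viewed as the logarithmic Higgs field $f_*\om_{X/Y}\to R^1f_*\sO_X\otimes\om_Y(\DE_{1,2})$ and combined with Fujita's decomposition of $f_*\om_{X/Y}$ into a trivial summand and a summand $\mathcal P$ of rank $\le g$ that injects into $\mathcal P^{\vee}\otimes\om_Y(\DE_{1,2})$, yields Arakelov's slope inequality
\[ \deg f_*\om_{X/Y}\ \le\ \tfrac g2\,\deg\om_Y(\DE_{1,2})\ =\ \tfrac g2\,\bigl(2g(Y)-2+\deg\DE_{1,2}\bigr). \]
Combining, $0<\tfrac g2\,(2g(Y)-2+\deg\DE_{1,2})$, and since $g>0$ this forces $2g(Y)-2+\deg\DE_{1,2}>0$; being an integer, it is $\ge 1$.

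Granting the positivity $\deg f_*\om_{X/Y}>0$ — which the paper already uses as a black box — the main obstacle is the Arakelov slope inequality: identifying the Higgs field with the right logarithmic twist, checking via relative duality that $R^1f_*\sO_X\cong(f_*\om_{X/Y})^{\vee}$, and verifying that the non-flat part $\mathcal P$ of $f_*\om_{X/Y}$ does inject into $\mathcal P^{\vee}\otimes\om_Y(\DE_{1,2})$. Classically this inequality is stated for semistable families; one can reduce to that case by a finite base change $\pi\colon Y'\to Y$ ramified only over $\DE_{1,2}$, and a Riemann--Hurwitz computation then shows $2g(Y')-2+\deg\DE'_{1,2}=d\,(2g(Y)-2+\deg\DE_{1,2})$, so nothing is lost. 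An alternative, which avoids the Arakelov inequality altogether, is to observe that $2g(Y)-2+\deg\DE_{1,2}\le 0$ would make $Y-\DE_{1,2}$ one of $\P^1$, $\mathbb{A}^1$, $\mathbb{G}_m$ or an elliptic curve, and to rule out a nonisotrivial smooth family of genus-$\ge 2$ curves over each: constancy of the period map (Liouville, or compactness of the base against Kobayashi hyperbolicity of the period domain) handles the simply connected and finite-monodromy cases, and the same positivity/Arakelov input disposes of the remaining unipotent-monodromy case.
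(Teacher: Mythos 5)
The paper does not actually prove this lemma; it cites it as \cite[1.4]{V00} and uses it as a black box, so there is no internal proof to measure yours against. Your Arakelov-type reconstruction (positivity of $\deg f_*\om_{X/Y}$ plus the logarithmic slope inequality) is nevertheless the standard route to such hyperbolicity-of-moduli statements, and the outline is sound: the Riemann--Hurwitz reduction to the semistable case works in the needed direction (the semistable model can only have a \emph{smaller} discriminant, which strengthens the bound), and the final arithmetic is correct.

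Two points, though. First, your derivation of $\deg f_*\om_{X/Y}>0$ from Noether's formula has a gap: from $12\deg f_*\om_{X/Y}=K_{X/Y}^2+\sum_s\delta_s$ you cannot conclude positivity of the right-hand side merely from ``some $\delta_s>0$'' unless you also know $K_{X/Y}^2\ge 0$, which for a relatively minimal fibration of genus $\ge 2$ is precisely Arakelov's nefness of $K_{X/Y}$; without invoking that (or simply quoting the strict positivity of $\deg f_*\om_{X/Y}$ for nonisotrivial relatively minimal fibrations, as your parenthetical does via \cite{V83II}, \cite{K87}), the dichotomy ``some fibre singular / $f$ smooth'' does not close. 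Second, dismissing the genus-one case is not actually necessary: a nonisotrivial relatively minimal elliptic surface still has $\deg f_*\om_{X/Y}=\chi(\sO_X)>0$, and the Arakelov inequality holds with $g=1$, so the same computation yields $2g(Y)-2+\deg\DE_{1,2}>0$. The lemma as stated (and as cited) has no genus restriction, so a complete proof should cover it, even though the restriction is harmless for the application in Theorem~\ref{notsmoothbase} where the smooth fibres are canonically polarised.
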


\begin{thm} \label{notsmoothbase}
Let $f:X \to Y$ be a flat nonisotrivial morphism between smooth projective varieties over $\C$ of dimensions 2 and 1, respectively.  Let $\DE_{1,2} \subset Y$ be the discriminant divisor, and $\DE_{2,1}:=f^*\DE_{1,2}$.  Let $B:=B_1$ be as above.  Suppose $\om_{X_y}$ is ample for all $y \notin \DE_{1,2}$, and both $\DE_{1,2}$ and $\DE_{2,1}$ are normal crossing divisors, then $\OM_X^1(\log \DE_{2,1})$ is quasi-ample with respect to $X-\DE_{2,1}-f^{-1}(B)$.
\end{thm}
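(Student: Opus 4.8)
The plan is to mimic the proof of Theorem \ref{above}, replacing the smooth family by the family with singular fibers over $\DE_{1,2}$, and to encode the bad fibers into the logarithmic structure. So let $\ga:C\to X$ be a non-constant morphism from a complete nonsingular curve with $\ga(C)\cap(X-\DE_{2,1}-f^{-1}(B))\neq\emptyset$; I must show $\ga^*\OM_X^1(\log\DE_{2,1})$ is ample on $C$. There are two cases, exactly as before.

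First, suppose $\ga(C)$ is contained in a fiber $X_y$ of $f$. Since $\ga(C)$ meets the good locus, $y\notin\DE_{1,2}$, so $X_y$ is a smooth curve of genus $\geq 2$ (by the hypothesis $\om_{X_y}$ ample) disjoint from $\DE_{2,1}$; hence $\OM_X^1(\log\DE_{2,1})|_{X_y}\iso\OM_X|_{X_y}$, and $\ga$ factors through a finite map $C\to X_y$, so it suffices to show $\OM_X|_{X_y}$ is ample. Since also $y\notin B\supseteq B_1'$, the Kodaira--Spencer-type map $\al_1$ is injective at $y$, so Lemma \ref{fiber} applies verbatim over the smooth part of the family: the sequence $\sesshort f^*\OM_Y|_{X_y}.\OM_X|_{X_y}.\om_{X_y}.$ does not split. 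Then $f^*\OM_Y|_{X_y}\iso\sO_{X_y}$ and $\om_{X_y}$ is ample, so Lemma \ref{gc} (Gieseker) gives that $\OM_X|_{X_y}$ is ample, as in Corollary \ref{imagefiber}.

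Second, suppose $\ga(C)$ is not contained in a fiber, so $f\circ\ga:C\to Y$ is non-constant. Here I use the residue/logarithmic analogue of the sequence from Theorem \ref{above}:
\[ \sesshort f^*\OM_Y^1(\log\DE_{1,2}).\OM_X^1(\log\DE_{2,1}).\om_{X/Y}(\log\DE_{2,1}/\DE_{1,2}).. \]
For the sub-bundle: $\OM_Y^1(\log\DE_{1,2})=\om_Y(\DE_{1,2})$ has degree $2g(Y)-2+\deg\DE_{1,2}\geq 1>0$ by Lemma \ref{notsmoothcurve}, so it is an ample line bundle on $Y$, and its pullback along the non-constant $f\ga$ is ample on $C$. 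For the quotient, the relative log-canonical bundle $\om_{X/Y}(\log)$ restricts on a smooth fiber $X_y$ ($y\notin\DE_{1,2}$) to $\om_{X_y}$, which is ample, and the analogue of Theorem \ref{amplerel} — together with the positivity of $f_*(\om_{X/Y}(\log))^m$ from Viehweg--Koll\'ar, Theorem \ref{pushforward}(2) — shows it is an ample line bundle on $X$; pulling back by $\ga$ keeps it ample. An extension of two ample line bundles over the curve $C$ is ample (Lemma \ref{gc}, or directly), so $\ga^*\OM_X^1(\log\DE_{2,1})$ is ample. Combining both cases yields quasi-ampleness with respect to $X-\DE_{2,1}-f^{-1}(B)$.

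The main obstacle is the second case: I need the correct logarithmic short exact sequence relating $\OM_X^1(\log\DE_{2,1})$, $f^*\OM_Y^1(\log\DE_{1,2})$, and a relative log-differential, and I need to know the relative piece is an ample line bundle on all of $X$. The subtlety is that $f$ is only flat (not smooth) and the fibers over $\DE_{1,2}$ are singular, so one cannot quote Theorem \ref{amplerel} directly; instead one runs the Koll\'ar--Viehweg argument with the log-canonical sheaf $\om_{X/Y}(\log\DE_{2,1}/\DE_{1,2})$ using Theorem \ref{pushforward}(2) to produce a power that is globally generated/ample after pullback, exactly as in the smooth case but with $\DE$ inserted. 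I would also need to check that $\om_{X/Y}(\log)|_{X_y}\iso\om_{X_y}$ over $y\notin\DE_{1,2}$ (clear) and that the log sequence is exact given that $\DE_{1,2}$ and $\DE_{2,1}$ are normal crossing — this is where the normal-crossing hypotheses on $\DE_{1,2}$ and $\DE_{2,1}$ are used.
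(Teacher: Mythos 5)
Your Case 1 matches the paper exactly, and your identification of the logarithmic short exact sequence
\[
\sesshort f^*\OM_Y^1(\log\DE_{1,2}).\OM_X^1(\log\DE_{2,1}).\OM_{X/Y}^1(\log\DE_{2,1}).
\]
is the right starting point for Case 2. But the claim that the quotient $\OM_{X/Y}^1(\log\DE_{2,1})$ is an ample line bundle on all of $X$ — which you flag as your ``main obstacle'' — is a genuine gap, and in fact that claim is almost certainly false. The paper notes that $\OM_{X/Y}^1(\log\DE_{2,1})\iso\om_{X/Y}\ten\sO_X((\DE_{2,1})_{\red}-\DE_{2,1})\subseteq\om_{X/Y}$, i.e.\ it is $\om_{X/Y}$ twisted down by an effective divisor supported on the bad fibers; over a point of $\DE_{1,2}$ the (singular, possibly reducible) fiber can contain curves on which this restriction has non-positive degree. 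Theorem \ref{amplerel} requires $\om_{X_y}$ ample for \emph{all} $y$, which fails here, so its ``analogue'' does not apply, and \ref{pushforward}(2) cannot give ampleness everywhere either — it only gives ampleness \emph{with respect to an open set}.

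What the paper actually does is more modest and avoids the global ampleness claim entirely. It applies \ref{pushforward}(2) to the smooth family $X-\DE_{2,1}-f^{-1}B\to Y-\DE_{1,2}-B$, which requires enlarging the log divisor to $\DE_{2,1}+f^{-1}B$ (not just $\DE_{2,1}$) precisely so that the moduli map is quasi-finite over the complement — a detail your argument omits. This gives $f_*\OM_{X/Y}^1(\log\DE_{2,1}+f^{-1}B)^m$ ample with respect to $Y-\DE_{1,2}-B$, hence via the relative generation map $f^*f_*(\cdot)^m\to(\cdot)^m$, ampleness of $\OM_{X/Y}^1(\log\DE_{2,1}+f^{-1}B)^m$ with respect to $X-\DE_{2,1}-f^{-1}B$. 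One then pulls back along the \emph{given} curve $\ga:C\to X$: since $\ga(C)$ meets the good open set, $\ga^*\OM_{X/Y}^1(\log\DE_{2,1}+f^{-1}B)^m$ is big, hence (being a line bundle on a nonsingular complete curve) ample on $C$; and since $f^{-1}B$ is reduced, $\ga^*\OM_{X/Y}^1(\log\DE_{2,1}+f^{-1}B)\iso\ga^*\OM_{X/Y}^1(\log\DE_{2,1})$. So the conclusion you want — that $\ga^*\OM_{X/Y}^1(\log\DE_{2,1})$ is ample on $C$ — holds, but only curve-by-curve for curves meeting the good locus, not because the sheaf is ample on $X$. Replacing your global ampleness claim by this ``ample-with-respect-to-an-open-set plus restriction to the curve'' argument closes the gap.
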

\begin{proof}  We have the following short exact sequence 
\[ \sesshort f^* \OM_Y^1(\log \DE_{1,2}). \OM_X^1(\log \DE_{2,1}). \OM_{X/Y}^1(\log \DE_{2,1}). \]Since $ \OM_{X/Y}^1(\log \DE_{2,1})$ is locally free, taking determinants gives $\OM_{X/Y}^1(\log \DE_{2,1}) \iso \om_{X/Y} \ten \sO_Y((\DE_{2,1})_{red} - \DE_{2,1}) \subseteq \om_{X/Y}$.  By (\ref{notsmoothcurve}), $\OM_Y^1(\log \DE_{1,2}) \iso \om_Y \ten \sO_Y(\DE_{1,2})$ is an ample line bundle on $Y$.

Let $\ga: C \to X$ be a non-constant morphism from a complete nonsingular curve $C$, such that $\ga(C) \cap(X - \DE_{2,1}-f^{-1}(B)) \neq \emptyset$.  Suppose first that $\ga(C)$ is contained in a fiber of $f$, say $\ga(C) \subseteq X_y$. Since $\ga: C \to X_y$ is finite, it suffices to show that $\OM_X|_{X_y}$ is ample.  Note that $y \notin \DE_{1,2}$ and hence $X_y \cap \DE_{2,1} = \emptyset$.  Since $y \notin B$, the same argument as in (\ref{imagefiber}) shows that the short exact sequence
\[ \sesshort T_{X/Y}|_{X_y}.T_X|_{X_y}.f^*T_Y|_{X_y}. \]
does not split.  Hence, 
\[ \sesshort f^*\OM_Y^1(\log \DE_{1,2})|_{X_y}.\OM_X^1(\log \DE_{2,1})|_{X_y}.\OM_{X/Y}^1(\log \DE_{2,1})|_{X_y}. \]
does not split. Now, $f^*\OM_Y^1(\log \DE_{1,2})|_{X_y} \iso \sO_{X_y}$, and $\OM_{X/Y}^1(\log \DE_{2,1})|_{X_y} \iso \om_{X_y}$ is ample, since $y \notin \DE_{1,2}$.  Thus, by (\ref{gc}), $ \OM_X|_{X_y}$ is an ample vector bundle on $X_y$.

Next suppose that $\ga(C)$ is not contained in a fiber of $f$, so $f \ga: C \to Y$ is non-constant.  Since $\om_Y \ten \sO_Y(\DE_{1,2})$ is ample, $\ga^* f^*( \om_Y \ten \sO_Y(\DE_{1,2}))$ is also ample.  To show that $\ga^* \OM_X^1(\log \DE_{2,1})$ is ample, it suffices to show that $\ga^* \OM_{X/Y}^1(\log \DE_{2,1})$ is ample.  We have a smooth family $X - \DE_{2,1} - f^{-1}B \to Y - \DE_{1,2} - B$ which satisfies the conditions of (\ref{pushforward}(2)), thus for $m$ sufficiently large and divisible $f_*\OM_{X/Y}^1(\log \DE_{2,1} + f^{-1}B)^m$ is ample with respect to $Y - \DE_{1,2} - B$.  Furthermore, for $m$ sufficiently large
\[ f^*f_*\OM_{X/Y}^1(\log \DE_{2,1} + f^{-1}B)^m \to \OM_{X/Y}^1(\log \DE_{2,1} + f^{-1}B)^m \]
is surjective over $X - \DE_{2,1} - f^{-1}B$.  Since $\ga(C) \cap(X - \DE_{2,1}-f^{-1}(B)) \neq \emptyset$, we find that $\ga^*\OM_{X/Y}^1(\log \DE_{2,1} + f^{-1}B)^m$ is ample with respect to a dense open set of the curve $C$, and hence that $\ga^*\OM_{X/Y}^1(\log \DE_{2,1} + f^{-1}B)$ is an ample line bundle on $C$. Since $f^{-1}B$ is reduced, $\ga^*\OM_{X/Y}^1(\log \DE_{2,1} + f^{-1}B) \iso \ga^*\OM_{X/Y}^1(\log \DE_{2,1})$ is also ample.  

Thus, we have the following short exact sequence
\[ \sesshort \ga^*f^* \OM_Y^1(\log \DE_{1,2}). \ga^*\OM_X^1(\log \DE_{2,1}). \ga^*\OM_{X/Y}^1(\log \DE_{2,1})., \]
with the outer terms ample, so $\ga^*\OM_X^1(\log \DE_{2,1})$ is ample.
Therefore, $\OM_X^1(\log \DE_{2,1})$ is quasi-ample with respect to $X - \DE_{2,1}-f^{-1}(B)$.
\end{proof}

We now prove the general case

\begin{thm}\label{notsmoothtower} Let
\[ \xymatrix{X^n \ar[r]^{f_n} & X^{n-1} \ar[r]^{f_{n-1}} & X^{n-2} \ar[r]^{f_{n-2}} & \cdots \cdots \ar[r]^{f_3} & X^2 \ar[r]^{f_2} & X^1} \]
where each $X^i$ is a smooth projective variety over $\C$ of dimension $i$. Suppose for $2 \leq i \leq n $,
\begin{enumerate}
\item  $f_i:X^i \to X^{i-1}$ is a flat projective morphism with $\var(f_i) = \dim X^{i-1}$,

\item $\om_{X^i_y}$ is ample for all $y \notin \DE_{i-1,i}:= \{ y \in X^{i-1} | f_i^{-1}(y) \text{ is singular} \}.$
\end{enumerate}
Then, if $\DE_i$ and $\DE_{i+1, i}$ (as defined above) are normal crossing divisors, the sheaf $\OM_{X^n}^1(\log \DE_{n,n-1})$ is quasi-ample with respect to $X^n - \DE_{n,n-1} - f_n^{-1}(B_{n-1}).$
\end{thm}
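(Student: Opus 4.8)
The plan is to induct on $n$, running the argument of Theorem~\ref{tower1} with the logarithmic modifications of Theorem~\ref{notsmoothbase} grafted in at each stage. The base case $n=2$ is exactly Theorem~\ref{notsmoothbase}. For the inductive step set $X:=X^n$, $Y:=X^{n-1}$, $f:=f_n$, abbreviate the target open set by $W_n:=X-\DE_{n,n-1}-f^{-1}(B_{n-1})$, and let $W_{n-1}:=Y-\DE_{n-1,n-2}-f_{n-1}^{-1}(B_{n-2})$ be the open set furnished by the inductive hypothesis. Since $\DE_{n,n-1}=f^*(\DE_{n-1})$, the residue sequence reads
\[ 0 \to f^*\OM_Y^1(\log\DE_{n-1}) \to \OM_X^1(\log\DE_{n,n-1}) \to \OM_{X/Y}^1(\log\DE_{n,n-1}) \to 0, \]
with $\OM_{X/Y}^1(\log\DE_{n,n-1})\iso\om_{X/Y}\ten\sO_X((\DE_{n,n-1})_{\red}-\DE_{n,n-1})$, a sub-line-bundle of $\om_{X/Y}$. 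Fix a non-constant $\ga:C\to X$ from a complete nonsingular curve with $\ga(C)\cap W_n\neq\emptyset$; we must prove $\ga^*\OM_X^1(\log\DE_{n,n-1})$ is ample, and we distinguish the usual two cases.

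\textbf{Case 1: $\ga(C)$ lies in a fibre $X_y$ of $f$.} Then $\ga(C)\cap W_n\neq\emptyset$ forces $y\notin\DE_{n-1}$ and $y\notin B_{n-1}$: the first makes $X_y$ a smooth curve disjoint from $\DE_{n,n-1}$, so $\OM_X^1(\log\DE_{n,n-1})|_{X_y}\iso\OM_X|_{X_y}$, and the second, together with $y\notin\DE_{n-1,n}$, gives $y\notin B_{n-1}'$, so $\al_{n-1}$ is injective at $y$. Exactly as in Theorem~\ref{tower1}, putting $s:=f_2\cdots f_{n-1}(y)$ and $h:=f_2\cdots f_n$, injectivity of $\al_{n-1}$ at $y$ forces $0\to T_{X_s}|_{X_y}\to T_X|_{X_y}\to h^*T_{X^1}|_{X_y}\to 0$, and hence $0\to\sO_{X_y}\to\OM_X|_{X_y}\to\OM_{X_s}|_{X_y}\to 0$, not to split; by Gieseker's Lemma~\ref{gc} it then suffices that $\OM_{X_s}|_{X_y}$ be ample. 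Because $y\notin\DE_{n-1}$, the recursive definition of the $\DE_i$ propagates downward to show the restricted tower $X_s=X^n_s\to\cdots\to X^2_s$ consists of smooth projective morphisms of maximal variation, so Theorem~\ref{tower1} applies and $\OM_{X_s}$ is quasi-ample with respect to $U_{n,s}$; the containments $B_{i,s}\subseteq B_i\cap X^i_s$ and $U_i\cap X^i_s\subseteq U_{i,s}$ (proved as there), combined with $y\notin B_{n-1}$, force $X_y\subseteq U_{n,s}$, whence $\OM_{X_s}|_{X_y}$ is ample.

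\textbf{Case 2: $f\ga:C\to Y$ is non-constant.} Here $f\ga(C)$ meets $Y-\DE_{n-1}-B_{n-1}\subseteq W_{n-1}$. By induction $\OM_Y^1(\log\DE_{n-1,n-2})$ is quasi-ample with respect to $W_{n-1}$; since $\DE_{n-1}=(\DE_{n-1,n-2}+\DE_{n-1,n})_{\red}$, the argument of Lemma~\ref{2div}, carried out with the open set $W_{n-1}$ in place of a full divisor complement, shows $\OM_Y^1(\log\DE_{n-1})$ is quasi-ample with respect to $W_{n-1}-\DE_{n-1,n}=Y-\DE_{n-1}-f_{n-1}^{-1}(B_{n-2})$, a set $f\ga(C)$ meets, so $\ga^*f^*\OM_Y^1(\log\DE_{n-1})$ is ample on $C$. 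For the quotient, the family $X-\DE_{n,n-1}-f^{-1}(B_{n-1})\to Y-\DE_{n-1}-B_{n-1}$ is smooth with canonically polarized fibres (hypothesis~(2)) and, by a Lemma~\ref{moduli}-type argument, has moduli map quasi-finite onto its image, so Theorem~\ref{pushforward}(2) gives, for $m$ large and divisible, that $f_*\sF^m$ is ample with respect to $Y-\DE_{n-1}-B_{n-1}$ and $f^*f_*\sF^m\to\sF^m$ is surjective over $W_n$, where $\sF:=\OM_{X/Y}^1(\log(\DE_{n,n-1}+f^{-1}(B_{n-1})))$. Restricting along $\ga$ (which meets $W_n$) produces a line bundle ample on a dense open of the curve $C$, hence ample; since $f^{-1}(B_{n-1})$ is reduced with no component in common with $\DE_{n,n-1}$, this bundle is $\ga^*\OM_{X/Y}^1(\log\DE_{n,n-1})$. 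Now $\ga^*\OM_X^1(\log\DE_{n,n-1})$ is an extension of an ample bundle by an ample bundle, hence ample, closing the induction.

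\textbf{Expected main obstacle.} The costliest part is keeping the two parallel systems $(\DE_i,\ \DE_{i,i-1},\ B_i,\ B_i')$ and the open sets $W_i$ synchronized across the induction: in particular, (i) verifying that Lemma~\ref{2div} really goes through relative to the shrunken open set $W_{n-1}$, which is what lets one pass between the inductive statement about $\OM_Y^1(\log\DE_{n-1,n-2})$ and the $\OM_Y^1(\log\DE_{n-1})$ in the residue sequence, and (ii) in Case~1, descending all the way to $X^1$ so the trivial sub-line-bundle has rank one and Gieseker's lemma applies, then checking that $y\notin\DE_{n-1}$ does make the restricted tower over $s$ smooth of maximal variation. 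The remaining non-formal input is the quasi-finiteness of the moduli map of the family over $Y-\DE_{n-1}-B_{n-1}$, since that is exactly what licenses Theorem~\ref{pushforward}(2).
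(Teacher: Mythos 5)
Your overall architecture matches the paper's: induction on $n$, the residue exact sequence, and a two-case analysis according to whether $\ga(C)$ sits in a fibre of $f_n$. Case~2 is essentially the paper's argument. The problem is in Case~1.

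You assert that ``\emph{Because $y\notin\DE_{n-1}$, the recursive definition of the $\DE_i$ propagates downward to show the restricted tower $X_s=X^n_s\to\cdots\to X^2_s$ consists of smooth projective morphisms of maximal variation, so Theorem~\ref{tower1} applies.}'' This is false. What the propagation $y_i := f_{i+1}\cdots f_{n-1}(y)\notin\DE_i$ gives you is (a) $s\notin\DE_{1,i}$ for every $i$, so every $X^i_s$ is a smooth variety, and (b) smoothness of the \emph{single} fibre $(f_{i+1})_s^{-1}(y_i)=f_{i+1}^{-1}(y_i)$. It does not make the discriminant $\DE_{i,i+1}\cap X^i_s$ empty: nothing prevents other points $z\in X^i_s$ from lying in $\DE_{i,i+1}$, in which case $(f_{i+1})_s$ has a singular fibre over $z$. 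So the restricted morphisms $(f_i)_s$ are only flat (with $\var=\dim X^{i-1}_s$ and canonically polarized general fibre), not smooth, and Theorem~\ref{tower1} does not apply. Likewise the sets $U_{i,s}$ of Theorem~\ref{tower1} are the wrong objects to carry around; you need the log-analogues $B'_{i,s}$, $B_{i,s}$.

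The fix is to apply the inductive hypothesis of Theorem~\ref{notsmoothtower} itself to the tower $X^n_s\to\cdots\to X^2_s$ (this is where the generality of allowing flat morphisms pays for itself): one obtains that $\OM^1_{X_s}(\log(\DE_{n,n-1}|_{X_s}))$ is quasi-ample with respect to $X_s-(f_n)_s^{-1}(B_{n-1,s})-\DE_{n,n-1}|_{X_s}$, and then one checks the containments $B'_{i,s}\subseteq B'_i\cap X^i_s$ and $(f_i)_s^{-1}(B_{i-1,s})\subseteq (f_i)_s^{-1}(B_{i-1}\cap X^{i-1}_s)$ so that $X_y$ meets this open set; since $X_y$ misses $\DE_{n,n-1}$ this gives ampleness of $\OM_{X_s}|_{X_y}$. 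That is exactly the step your proof is missing: keeping the induction at the level of the log/flat theorem rather than retreating to the smooth one.
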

\begin{proof}  We will prove this by induction on $n$.  By (\ref{notsmoothbase}), the theorem is true for $n=2$.  Suppose it is true for $n-1$.  Let $X:=X^n$, then we have the short exact sequence
\[ \sesshort f_n^* \OM_{X^{n-1}}^1(\log \DE_{n-1}). \OM_X^1(\log \DE_{n,n-1}). \OM_{X/X^{n-1}}(\log \DE_{n,n-1}).. \]
Let $\ga: C \to X$ be a non-constant morphism from a complete nonsingular curve $C$ such that $\ga(C)\cap (X - \DE_{n,n-1} - f_n^{-1}(B_{n-1})) \neq \emptyset$.  Suppose first that $\ga(C)$ is contained in a fiber of $f_n$, say $\ga(C) \subseteq X_y$ for some $y \in X^{n-1}$. We must show that $\OM_X(\log \DE_{n,n-1})|_{X_y}$ is ample.  Note, that $y \notin B_{n-1} \cup \DE_{n-1}$, and in particular, $X_y \cap \DE_{n,n-1} = \emptyset$.

Note, $\DE_{i,1} \subseteq \DE_{i,i-1}$ for all $i$, since if $t \in \DE_{1,i}$, then $f_i^{-1} \cdots f_2^{-1}(t)$ is singular, hence there exists $y \in X^j_t$ for some $1 \leq j \leq i-1$, such that $X^{j+1}_y$ is singular, that is $y \in \DE_{j,j+1}.$  Thus, $\DE_{i,1} \subseteq f_i^*\DE_{i-1} = \DE_{i,i-1}$.  
So if we define $s:=f_2f_3 \cdots f_{n-1}(y) \in X^1$, then $s \notin \DE_{i,1}$ for all $i \leq n$, so each $X^i_s$ is a smooth projective variety over $\C$ of dimension $i-1$.
Let $h=f_2 f_3 \cdots f_{n-1}f_n: X=X^n \to X^1$. Then we have the following short exact sequence
\[ \sesshort T_{X/X^1}(-\log \DE_{n,n-1}).T_{X}(-\log \DE_{n,n-1}).h^*T_{X^1}(-\log \DE_{1,n}).. \]
Restricting to $X_s$ gives,
\[ \sesshort T_{X_s}(-\log (\DE_{n,n-1}|_{X_s})).T_{X}(-\log \DE_{n,n-1})|_{X_s}.h^*T_{X^1}|_{X_s}., \]
and restricting further to $X_y$ gives,
\[ \sesshort T_{X_s}|_{X_y}.T_{X}|_{X_y}.h^*T_{X^1}|_{X_y}.. \]
As in the smooth case (\ref{tower1}), we find that
\[ \sesshort h^*\OM_{X^1}|_{X_y}.\OM_X|_{X_y}.\OM_{X_s}|_{X_y}. \]
does not split. Thus, 
\[ \sesshort \sO_{X_y}.\OM_X^1(\log \DE_{n,n-1})|_{X_y} .\OM_{X_s}^1(\log (\DE_{n,n-1}|_{X_s}))|_{X_y}.\]
does not split. To show that $ \OM_X^1 (\log \DE_{n,n-1})|_{X_y}$ is ample, it suffices to show that $\OM_{X_s}^1(\log(\DE_{n,n-1}|_{X_s})|_{X_y}$ is ample.

We have
\[ \xymatrix@1{X_s = X^n_s \ar[r]^(.55){(f_n)_s} & X^{n-1}_s \ar[r]^{(f_{n-1})_s}  & \cdots  \cdots \ar[r] & X^3_s  \ar[r]^{(f_3)_s} & X^2_s}, \]
where each $X^i_s$ is a smooth projective variety over $\C$ of dimension $i-1$, and each $(f_i)_s:X^i_s \to X^{i-1}_s$ is a flat projective morphism with the property that $\var(f_i)_s = \dim(X^{i-1})_s$.  Furthermore,  $\om_{(X^i_s)_y}$ is ample for all $y \notin \DE_{i-1,i}|_{X^{i-1}_s}$.  

 For $2 \leq i \leq n-1$, define
 \[ B'_i,s := \{ p \in X^{i}_s - \DE_{i,i+1} |_{ X^{i}_s } |\: \al_{i,s}: \coh 0. X^{i+1}_p.(f_{i+1})_s^*T_{X^i_s}|_{X_p^{i+1}}. \to \coh 1.(X^{i+1}_s)_p.T_{(X^{i+1}_s)_p}.  \text{ is not injective}  \}. \]
As seen in (\ref{tower1})  $B'_{i,s} \subseteq B_i' \cap X^i_s$.  Set $B_{2,s} := B'_{2,s}$ and for $3 \leq i \leq n$ set
\[ B_{i,s} := B'_{i,s} + (f_i)^{-1}_s(B_{i-1,s}), \]
then by induction, $\OM_{X_s}^1(\log (\DE_{n,n-1}|_{X_s}))$ is quasi-ample with respect to 
\[X_s - (f_{n})^{-1}_s(B_{n-1,s})-\DE_{n,n-1}|_{X_s}.\]  
I claim that $(f_i)^{-1}_s(B_{i-1,s}) \subseteq (f_i)_s^{-1}(B_{i-1} \cap {X^{i-1}_s})$; indeed for $i=2$ this is true, since $B_{2,s} = B'_{2,s}$, and so 
 \begin{eqnarray*}
 (f_i)^{-1}_s(B_{i-1,s})  & =  & (f_i)^{-1}_s(B'_{i-1,s} + (f_{i-1})_s^{-1}(B_{i-2,s}))  \\
   & \subseteq  &  (f_i)^{-1}_s(B'_{i-1} \cap {X^{i-1}_s} + f_{i-1}^{-1}(B_{i-2}\cap {X^{i-2}_s})) \\
 & = & (f_i)_s^{-1}(B_{i-1}\cap{X^{i-1}_s}). 
 \end{eqnarray*}
Thus, since 
 \begin{eqnarray*}
\emptyset \neq X_y \cap  (X - \DE_{n,n-1} - f_n^{-1}(B_{n-1})) &=& X_y \cap (X_s - \DE_{n,n-1}|_{X_s} - f_n^{-1}(B_{n-1} \cap X_s^{i-1}))\\
&  \subseteq & X_y \cap (X_s  - (f_{n})^{-1}_s(B_{n-1,s})-\DE_{n,n-1}|_{X_s})
 \end{eqnarray*}
we have that $\OM_{X_s}^1(\log(\DE_{n,n-1}|_{X_s})|_{X_y}$ is ample.

Now suppose $\ga(C)$ is not in a fiber of $f_n$, so $f_n \ga:C \to X^{n-1}$ is non-constant.  By induction $\OM_{X^{n-1}}^1(\log \DE_{n-1,n-2})$ is quasi-ample with respect to $X^{n-1} - \DE_{n-1, n-2} - f_{n-1}^{-1}(B_{n-2}).$  Then since $\DE_{n-1} := (\DE_{n-1,n-2} + \DE_{n-1,n})_{\text{red}}$,   (\ref{2div}) implies that  $\OM_{X^{n-1}}^1(\log \DE_{n-1})$ is  quasi-ample with respect to $X^{n-1} - \DE_{n-1} - f_{n-1}^{-1}(B_{n-2}).$  

Therefore, we have the following short exact sequence
\[ \sesshort \ga^* f_n^* \OM_{X^{n-1}}^1(\log \DE_{n-1}). \ga^* \OM_X^1(\log \DE_{n,n-1}). \ga^* \OM_{X/X^{n-1}}^1(\log \DE_{n,n-1}). \]
with $\ga^* f_n^* \OM_{X^{n-1}}^1(\log \DE_{n-1})$ ample, and so it suffices to show $\ga^* \OM_{X/X^{n-1}}^1(\log \DE_{n,n-1})$ is ample.  

As in (\ref{notsmoothbase}), we have a smooth family $X - \DE_{n,n-1} - f_n^{-1}B_{n-1}  \to Y - \DE_{n-1} - B_{n-1}$ which satisfies the conditions of (\ref{pushforward}(2)), thus for $m$ sufficiently large and divisible $f_*\OM_{X/Y}^1(\log \DE_{n,n-1} + f^{-1}B)^m$ is ample with respect to $Y - \DE_{n-1} - B_{n-1}$.  Furthermore, for $m$ sufficiently large
\[ f^*f_*\OM_{X/X^{n-1}}^1(\log \DE_{n,n-1} + f^{-1}B_{n-1})^m \to \OM_{X/X^{n-1}}^1(\log \DE_{n-1} + f^{-1}B_{n-1})^m \]
is surjective over $X - \DE_{n-1,n} - f^{-1}B_{n-1}$.  Since $\ga(C) \cap(X - \DE_{n-1,n} - f^{-1}B_{n-1}) \neq \emptyset$, we find that $\ga^*\OM_{X/X^{n-1}}^1(\log \DE_{n-1} + f^{-1}B_{n-1})$ is an ample line bundle on $C$.  Since $f^{-1}B_{n-1}$ is reduced, we have that $\ga^*\OM_{X/X^{n-1}}^1(\log \DE_{n-1} + f^{-1}B_{n-1}) \iso \ga^*\OM_{X/X^{n-1}}^1(\log \DE_{n-1})$.
Thus  $\OM_{X^n}^1(\log \DE_{n,n-1})$ is quasi-ample with respect to $X^n - \DE_{n,n-1} - f_n^{-1}(B_{n-1})$.
\end{proof}

\section{Constructing Towers of Smooth Projective Varieties} \label{subsec:ex}

In this section we construct a tower of smooth projective varieties over the complex numbers and smooth morphisms between them of maximal variation.    
We first recall a construction of Kodaira, which for any $n$ produces a $g$ for which $\sM_g$ contains a complete $n$-dimensional subvariety, see \cite{HM}, \cite{O95}.

\begin{lemma} \textup{\cite{O95}} Let $K$ be a field, $D$ a curve over $K$ of genus $g(D) \geq 2$, and $Q \in D(K)$.  Suppose $\charact (K) \neq 2, 3$.  Then there exists a finite extension $K \subset L$ and a covering $C \to D$ of degree $3$ defined over $L$, totally ramified in $C \owns P \mapsto Q \in D$, and unramified elsewhere.
\end{lemma}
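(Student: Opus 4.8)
The plan is to build the covering over an algebraic closure $\overline{K}$ by a monodromy/fundamental-group argument and then to descend it to a finite subextension. Fix an algebraic closure $\overline K$ and a geometric point of $D_{\overline K}\setminus\{Q\}$, and write $G:=\pi_1^{\mathrm t}(D_{\overline K}\setminus\{Q\})$ for the tame fundamental group. Since $3$ is prime to $\charact K$, giving a connected degree-$3$ covering of $D_{\overline K}$ that is étale away from $Q$ and has a single point of ramification index $3$ over $Q$ is the same as giving a continuous homomorphism $\varphi\colon G\to S_3$ (acting on $\{1,2,3\}$) such that $\varphi(G)$ is transitive on $\{1,2,3\}$ and $\varphi$ sends a generator $\gamma_Q$ of the tame inertia at $Q$ to a $3$-cycle: transitivity makes the associated cover connected, and the $3$-cycle condition makes the fibre over $Q$ a single point of ramification index $3$. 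Given such a $\varphi$, the curve $\overline C$ is the normalization of $D_{\overline K}$ in the function field of the associated cover; because $3$ is invertible in $K$ this normalization is smooth over $\overline K$, of the required type.

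The construction of $\varphi$ is where $\charact K\neq 2,3$ and $g(D)\geq 1$ are used. Since $\#S_3=6$ is prime to $\charact K$, a homomorphism $G\to S_3$ is the same as a homomorphism to $S_3$ from the maximal prime-to-$\charact K$ quotient of $G$, and by Grothendieck's structure theorem for the tame fundamental group of an affine curve (SGA~1, Exp.~XIII; the Riemann existence theorem when $\charact K=0$) the latter is the prime-to-$\charact K$ completion of the free group $\Gamma$ on $a_1,b_1,\dots,a_g,b_g$, with $\gamma_Q$ identified with $\prod_{i=1}^g[a_i,b_i]$. I would then define $\varphi$ on $\Gamma$ by $\varphi(a_1)=(1\,2)$, $\varphi(b_1)=(1\,3)$, and $\varphi$ trivial on every other generator (this uses $g\geq 1$). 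Then $\varphi(\gamma_Q)=[(1\,2),(1\,3)]=(1\,2\,3)$ is a $3$-cycle, and $\varphi(\Gamma)=\langle(1\,2),(1\,3)\rangle=S_3$ is transitive, as needed; as $\varphi(\Gamma)$ has order prime to $\charact K$, $\varphi$ factors through $G$. This yields the covering $\overline C\to D_{\overline K}$ (in particular, it is not Galois, consistent with the fact that a cyclic triple cover cannot be totally ramified at a single point).

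To descend, note $\overline K=\varinjlim_L L$ over the finite subextensions $K\subseteq L\subseteq\overline K$ and $D\setminus\{Q\}$ is of finite type over $K$; hence every finite étale cover of $D_{\overline K}\setminus\{Q\}$ is the base change of a finite étale cover of $D_L\setminus\{Q\}$ for some finite $L/K$, by the standard limit argument for finite étale morphisms. Applying this to $\overline C$, and enlarging $L$ if necessary so that the descended cover is geometrically connected, let $C$ be the normalization of $D_L$ in the function field of that cover. Then $C$ is a smooth projective curve over $L$ (smooth over the point above $Q$ because the ramification index $3$ is invertible in $K$), the map $C\to D_L=:D$ is finite of degree $3$, étale over $D\setminus\{Q\}$, and over $Q$ there is a unique point $P$ with $e(P/Q)=3$ and $f(P/Q)=1$, so $P\in C(L)$ and $P\mapsto Q$. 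This $L$ and $C$ satisfy the conclusion.

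The crux — and the reason characteristics $2$ and $3$ are excluded — is the second paragraph: one needs $\pi_1^{\mathrm t}(D_{\overline K}\setminus\{Q\})$ to be large enough to carry the $S_3$-quotient above, which holds by Grothendieck's comparison of the prime-to-$p$ tame fundamental group with the topological fundamental group precisely when $p=\charact K$ does not divide $\#S_3=6$; in characteristic $0$ this is classical, and in characteristics $2$ or $3$ no such \emph{tame} cover need exist. The remaining ingredients — the dictionary between covers and monodromy, the elementary computation $[(1\,2),(1\,3)]=(1\,2\,3)$ in $S_3$, and the limit/spreading-out descent — are routine.
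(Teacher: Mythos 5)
The paper does not prove this lemma; it is quoted from \cite{O95} and used as a black box, so there is no in-paper argument to compare against. Your monodromy proof is correct and is the standard way to establish the statement: the dictionary between connected degree-$3$ tame covers ramified only over $Q$ and transitive homomorphisms $\pi_1^{\mathrm t}(D_{\overline K}\setminus\{Q\})\to S_3$ sending the inertia generator to a $3$-cycle; Grothendieck's identification (SGA 1, Exp.\ XIII) of the maximal prime-to-$p$ quotient of that tame fundamental group with the prime-to-$p$ completion of the once-punctured surface group, with $\gamma_Q=\prod_i[a_i,b_i]$; the explicit choice $a_1\mapsto(1\,2)$, $b_1\mapsto(1\,3)$ giving $\gamma_Q\mapsto(1\,2\,3)$ and image $S_3$ of order prime to $\charact K$; and the spreading-out descent to a finite subextension $L/K$ — all of this is sound. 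Your parenthetical remark that a cyclic triple cover cannot work, since $\gamma_Q$ is a product of commutators and hence dies in any abelian quotient, correctly isolates why a non-abelian target group is forced and why $\charact K\neq 2$ (and not merely $\neq 3$) enters. Two small points: smoothness of $\overline C$ over $\overline K$ is automatic (the normalization of a projective curve over an algebraically closed field is smooth) and needs no invertibility of $3$; where tameness and $\charact K\neq 3$ genuinely earn their keep is after descent, forcing $e(P/Q)=3$, $f(P/Q)=1$ and hence $P\in C(L)$, exactly as you argue. And, as you observe, your construction only uses $g(D)\geq 1$; the hypothesis $g(D)\geq 2$ appears in the lemma because it is what the iterated Kodaira construction in Section~\ref{subsec:ex} actually requires downstream.
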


By keeping $D$ fixed, and letting the point $Q$ vary we construct a family of non-singular projective curves parametrized by a covering $D'$ of $D$ and its image is a complete curve in $\sM_{g(C)}$, where $g(C)=3g(D)-1$.  Iterating this construction, that is, considering all covers of degree three of curves in the family $\{ C_\la \}$ ramified at one point, we get a complete $2$-dimensional family of curves of genus $9g(D)-4$.  In general, we obtain a complete $n$-dimensional family of curves of genus $3^ng(D) - (3^n-1)/2$.

We now work over $\C$ and fix a curve $C_0$ of genus two.  Consider $\{C_\la\}$ the set of degree three covers of $C_0$ ramified in one point.  These are parametrized by some curve $B_0$, a cover of $C_0$.  Thus we get a map $g': \{ C_\la \} \to B_0$ with fibers $C_\la$ for $\la \in B_0$.  Now, $\{[C_\la]\}$, the image of $\{C_\la \}$ in $\sM_5$ is a complete curve, hence $g'$ must be nonisotrivial.

For each $\la \in B_0$, we can iterate this construction.  For a fixed $\la_0 \in B_0$, consider $\{C_{\la_0, \mu} \}$, the set of degree three covers of $C_{\la_0}$ ramified in one point, paramatrized by some curve $B_{\la_0}$.   For this fixed $\la_0$, $\{[C_{\la_0, \mu}] \}$ is a complete curve in $\sM_{14}$, so $f_{\la_0}:\{C_{\la_0, \mu} \} \to B_{\la_0}$ is nonisotrivial.  Letting $\la$ vary, we get a smooth projective morphism $f: X \to Y$ where $X=\{C_{\la, \mu}\}$ and $Y= \{ B_\la \}$ are projective varieties of dimension 3 and 2, respectively.  I claim that for any $p \in Y$, the set $\{ q \in Y | X_p \iso X_q \}$ is finite, so in particular $\var f = 2$.  Indeed, if $p,q \in B_{\la_0}$ for some $\la_0$, then since $f_{\la_0}$ is nonisotrivial, $X_p$ is not isomorphic to $X_q$.  Next suppose $F_1$ is any fiber of $f$ with $F_1 \in C_{\la_1, \mu}$, then, in particular,  $F_1$ is a covering of $C_{\la_1}$. But $F_1$ can only cover finitely many curves, hence $F_1$ can be isomorphic to at most finitely many other fibers $F_i \in C_{\la_i, \mu}$.  Thus any fiber of $f$ is isomorphic to finitely many other fibers, and hence for any  $p \in Y$, the set $\{ q \in Y | X_p \iso X_q \}$ is finite.

Let $g: Y \to B_0$ be the composition $Y= \{B_\la \} \to \{C_\la \} \to B_0$.  If $B_\la \iso B_{\la'}$ for general $\la, \la' \in B_0$, then either $C_\la \iso C_{\la'}$ for general $\la, \la' \in B_0$ or $B_\la$ covers infinitely many non-isomorphic curves, both of which lead to a contradiction.  Hence $g:Y \to B_0$ is nonisotrivial.  Thus we have the tower
\[ \xymatrix{X \ar[r]^f & Y \ar[r]^g & B_0 }\]
of smooth projective varieties with $\dim X = 3$, $\dim Y =2$, $\dim B_0=1$, and smooth morphisms of maximal variation.  One can iterate this construction to get a tower 
\[ \xymatrix{X^n \ar[r]^{f_n} & X^{n-1} \ar[r]^{f_{n-1}} & X^{n-2} \ar[r]^{f_{n-2}} & \cdots \cdots \ar[r]^{f_3} & X^2 \ar[r]^{f_2} & X^1} \]
where each $X^i$ is a smooth projective variety over $\C$ of dimension $i$, and for $2 \leq i \leq n $, $f_i:X^i \to X^{i-1}$ is a smooth, projective morphism satisfying the property that for all $y \in X^{i-1}$, the set $\{p \in Y | X^i_p \iso X^i_y \}$ is finite, so in particular $\var(f_i) = \dim X^{i-1}$.

\nocite{*}
\bibliographystyle{plain}
\bibliography{positivity}

\end{document}